\documentclass[12pt]{amsart} 
\usepackage{amsmath,amsthm}
\usepackage{amssymb}
\usepackage{enumerate,url}

\parindent0pt




\newcommand{\ph}{\varphi}
\newcommand{\eps}{\varepsilon}
\renewcommand{\gcd}{\operatorname{gcd}}

\newcommand{\zeile}{\vspace{\baselineskip}}

\newcommand{\N}{\mathbb{N}}

\newcommand{\Z}{\mathbb{Z}}
\newcommand{\R}{\mathbb{R}}
\newcommand{\C}{\mathbb{C}}

\newcommand{\e}{\mathrm{e}}

\newcommand{\uq}{\mathbf{q}}
\newcommand{\ux}{\mathbf{x}}

\newcommand{\md}{\textup{~mod~}}

\newtheorem{theorem}{Theorem}[section]
\newtheorem{lemma}[theorem]{Lemma}

\newtheorem{assumptions}[theorem]{Assumptions}
\newtheorem{conjecture}[theorem]{Conjecture}

\allowdisplaybreaks

\begin{document}

\title[A Bombieri--Vinogradov Theorem with Gaussian primes]{A Bombieri--Vinogradov Theorem with products of Gaussian primes as moduli}

\author[K.~Halupczok]{Karin Halupczok}

\address{Karin Halupczok,  Mathematisches Institut, 
WWU M\"{u}nster,
Einsteinstra\ss{}e 62, D-48149 M\"{u}nster, Germany}

\email{karin.halupczok@uni-muenster.de}

\date{}

\begin{abstract}
We prove a version of the Bombieri--Vinogradov Theorem
with certain products of Gaussian primes as moduli,
making use of their special form as 
polynomial expressions in several variables.
Adapting Vaughan's proof of the classical 
Bombieri--Vinogadov Theorem, cp.~\cite{VauT} to this setting,
we apply the polynomial large sieve inequality that
has been proved in \cite{KH} and which includes
recent progress in Vinogradov's mean value theorem
due to Parsell \emph{et al.}\ in \cite{PPW}.
From the benefit of these improvements,
we obtain an extended range for the variables
compared to the range obtained from
standard arguments only. 
\end{abstract}

\keywords{Polynomial large sieve inequality, 
Bombieri--Vinogradov Theorem, polynomial moduli in several
variables, Gaussian primes}

\maketitle

\section{Introduction}
\label{sec:intro}

The classical theorem of Bombieri--Vinogradov states that
\begin{theorem}[Theorem of E.~Bombieri \cite{Bo} and 
A.~I.~Vinogradov \cite{AIV1,AIV2}]
\label{thm:BVclass}
For any $A,Q,x> 1$ we have
\[
   \sum_{q\leq Q} \sup_{y\leq x} \max_{a \md q} 
   |E(y;q,a)|\ll_{A} \frac{x}{(\log x)^{A}} + 
   Q\sqrt{x}(\log (Qx))^{3}
\]
where 
\[
E(y;q,a):=\psi(y;q,a)-\frac{y}{\ph(q)}
\text{ and }
\psi(y;q,a):=\sum_{\substack{n\leq y\\ n\equiv a\md q}} \Lambda(n),
\]
\end{theorem}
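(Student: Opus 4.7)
The plan is to adapt Vaughan's approach. First I would reduce $\psi(y;q,a)$ to character sums via orthogonality,
\[
\psi(y;q,a) = \frac{1}{\ph(q)} \sum_{\chi \md q} \bar\chi(a)\, \psi(y,\chi), \qquad \psi(y,\chi) := \sum_{n \leq y} \chi(n)\Lambda(n),
\]
so the principal-character contribution yields the main term $y/\ph(q)$ up to a negligible error, and the task reduces to bounding the contribution of the non-principal $\chi$. Passing to primitive characters (at the cost of only an additional $\log$-factor from divisor considerations), the problem becomes to estimate
\[
\sum_{q \leq Q} \frac{q}{\ph(q)} \sideset{}{^*}\sum_{\chi \md q} \sup_{y \leq x} |\psi(y,\chi)|,
\]
where $\sideset{}{^*}\sum$ is restricted to primitive $\chi$.

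For the small moduli, say $q \leq (\log x)^{B}$ with $B = B(A)$ chosen appropriately, the Siegel--Walfisz theorem gives $\psi(y,\chi) \ll y \exp(-c\sqrt{\log y})$, easily absorbed into $x/(\log x)^{A}$. For the remaining range I apply Vaughan's identity to decompose
\[
\Lambda(n) = \sum_{\substack{bd=n\\ b \leq U}}\mu(b)\log d - \sum_{\substack{bcd=n\\ b \leq U,\ c \leq V}} \mu(b)\Lambda(c) + \sum_{\substack{bcd=n\\ b > U,\ c > V}}\mu(b)\Lambda(c),
\]
which, after partial summation to remove the supremum over $y$ and dyadic decomposition of the ranges of $b$, $c$, $d$, expresses $\psi(y,\chi)$ as a sum of $O((\log x)^{2})$ bilinear forms of Type I (one smooth variable) and Type II (both variables genuinely unconstrained).

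To each Type II block I would apply Cauchy--Schwarz in $\chi$ followed by the classical large sieve inequality for Dirichlet characters,
\[
\sum_{q \leq Q} \frac{q}{\ph(q)} \sideset{}{^*}\sum_{\chi \md q} \Bigl|\sum_{M < n \leq 2M} c_{n}\chi(n)\Bigr|^{2} \ll (Q^{2} + M)\sum_{n} |c_{n}|^{2};
\]
the Type I sums are handled by similar but simpler arguments, using individual character-sum estimates for the smooth variable. The main obstacle is the careful balancing of the parameters $U$ and $V$ in Vaughan's identity so that every resulting bilinear block falls within a range where the large sieve yields a genuine saving. The classical choice $U = V = x^{1/3}$ forces the Type II blocks into the window $x^{1/3} \ll M \ll x^{2/3}$, producing the bound $\ll (Q\sqrt{x} + x)(\log(Qx))^{O(1)}$. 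Combining with the Siegel--Walfisz contribution and carefully tracking the logarithmic losses from the passage to primitive characters, the dyadic decomposition, and Vaughan's identity then yields the claimed inequality with factor $(\log(Qx))^{3}$.
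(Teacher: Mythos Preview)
The paper does not give its own proof of Theorem~\ref{thm:BVclass}; it is quoted as the classical result of Bombieri and Vinogradov, with a pointer to Vaughan's proof~\cite{VauT} as the standard approach. Your sketch is exactly this Vaughan method (orthogonality, reduction to primitive characters, Siegel--Walfisz for small moduli, Vaughan's identity plus the multiplicative large sieve for the rest), so there is nothing substantive in the paper to compare against --- your proposal \emph{is} the proof the paper cites.

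A couple of minor comments on the sketch itself. The form of Vaughan's identity you wrote is not quite the usual one: as stated it only makes sense for $n>\max(U,V)$, and in the standard version the third (Type~II) sum has the shape $\sum_{mk=n,\,m>U,\,k>V}\mu(m)\Lambda(k)$ rather than a triple product $bcd=n$; this does not affect the strategy but would need to be tidied up. The supremum over $y\le x$ is typically removed not by partial summation but by a Perron-type device (compare Lemma~\ref{ridofrestr} in this paper), which is what produces one of the logarithmic factors. Finally, obtaining the precise exponent~$3$ on $\log(Qx)$ requires tracking the log-losses carefully through the dyadic decomposition, the bilinear Cauchy--Schwarz step, and the reduction to primitive characters; your $(\log(Qx))^{O(1)}$ is the honest outcome of the sketch, and pinning it down to~$3$ is bookkeeping rather than a new idea.
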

so the nontrivial upper bound $\ll_{A} x(\log x)^{-A}$ is
obtained for $Q\leq x^{1/2}(\log x)^{-3-A}$.
It is well known to be a difficult task to break the ``$1/2$-barrier'',
which means to show the estimate for $q$ bigger that
$x^{1/2}$.
The famous Elliott--Halberstam conjecture in \cite{EHconj} states that 
the estimate
should hold even with $Q\ll x^{1-\eps}$. Many applications, especially
recent progress in the solution of the small gap conjecture,
rely on such improvements on the bound $Q$ for certain moduli sets 
for $q$.
It has been found by Y.~Zhang in \cite{Zha} that a restriction to certain
smooth moduli breaks the $1/2$-barrier.

A standard approach to prove Bombieri--Vinogradov's theorem
is the proof of Vaughan, cp.~ \cite{VauT}, by making use of the large sieve
inequality. In \cite{KH}, nontrivial improvements of the large
sieve inequality with polynomial moduli have been achieved, based
on the work of Parsell \emph{et al.}\ in \cite{PPW} 
in connection with progress in Vinogradov's mean value theorem. 
The results of \cite{KH} are superior to standard approaches 
in a number of applications where the degree
of the considered polynomial is bigger than the number of variables.

In this article, we use Vaughan's approach to prove a variant
of the Bombieri--Vinogradov Theorem 
with polynomial moduli having certain properties.
The polynomial behavior of the moduli is 
exploited in the proof by using the result in \cite{KH}.
Our main result is the following:

\begin{theorem}[A Bombieri--Vinogradov Theorem with special 
polynomial moduli]
\label{th:BV}  
Let $A,Q,x> 1$ be real and $\ell,k\geq 1$ be integers.
Consider two maps
 $u,v:\{1,\dots,k\}\to\{1,\dots,\ell\}$ such that
$\{u(i),v(i)\}\neq\{u(j),v(j)\}$
for $i\neq j$,
and let $P\in\Z[x_{1},\dots,x_{\ell}]$ be the
polynomial $P(\ux):=\prod_{i=1}^{k}(x_{u(i)}^{2}+x_{v(i)}^{2})$.

Let $\sigma=1/(4kr)$ with $r:=\binom{2k+\ell-1}{\ell}-1$ 
and suppose that
\begin{equation}
\label{eq:QIV}
x^{\eps/\sigma}\ll Q\leq
x^{(1/3-2\eps)/(2k-\sigma)}
\end{equation}
for an arbitrary $\eps>0$.
Then we have the estimate
\[
\sum_{\uq\sim Q} G_{\uq}\frac{\ph(P(\uq))}{Q^{\ell}}
   \sup_{y\leq x} \max_{\substack{a\md P(\uq)\\\gcd(a,P(\uq))=1}}
    |E(y;P(\uq),a)| \ll_{A,\ell,k,\eps} \frac{x}{(\log x)^{A}},
\]
where 
\[
   G_{\uq}:= \mu^{2}(P(\uq))
   \Lambda(q_{u(1)}^{2}+q_{v(1)}^{2})\cdots
   \Lambda(q_{u(k)}^{2}+q_{v(k)}^{2}),
\]
and the sum runs
over all $\uq$ with $Q<q_{i}\leq 2Q$, $i=1,\dots,k$.
\end{theorem}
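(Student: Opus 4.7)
The plan is to transplant Vaughan's proof of the classical Bombieri--Vinogradov theorem to this polynomial-modulus setting, using the polynomial large sieve inequality of \cite{KH} in place of the Montgomery--Vaughan large sieve at the crucial bilinear step. I would begin with the usual orthogonality reduction, writing
\[
   \psi(y;P(\uq),a) - \frac{y}{\ph(P(\uq))} = \frac{1}{\ph(P(\uq))}\sum_{\substack{\chi\md P(\uq)\\ \chi\neq\chi_{0}}}\overline{\chi}(a)\,\psi(y,\chi) + O(\log^{2}x),
\]
pulling the supremum over $y$ and the maximum over $a$ inside, and then rewriting each non-principal $\chi$ in terms of its primitive inducing character. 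The weight $\mu^{2}(P(\uq))$ inside $G_{\uq}$ guarantees that $P(\uq)$ is squarefree, which keeps this reduction clean; afterwards the left-hand side of the theorem is controlled by a sum over pairs $(\uq,\chi^{*})$ with $\chi^{*}$ primitive of some conductor $d\mid P(\uq)$.

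Next I would apply Vaughan's identity to each twisted sum $\psi(y,\chi)$ with a parameter $U=x^{\alpha}$ for a suitable small $\alpha>0$, producing a main term controlled via Siegel--Walfisz together with Type~I sums (one variable of length $\leq U^{2}$) and Type~II sums (both variables in a dyadic range within $(U,x/U)$). The Type~I sums are handled by partial summation combined with the P\'olya--Vinogradov inequality applied to the primitive characters; the lower restriction $Q\gg x^{\eps/\sigma}$ in \eqref{eq:QIV} reflects the conductor bound needed to extract power savings from Siegel--Walfisz in this range.

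The essential novelty lies in the Type~II estimate. After a Cauchy--Schwarz step one is reduced to a dyadic bilinear form of shape
\[
   \sum_{\uq\sim Q} G_{\uq}\,\sideset{}{^{*}}\sum_{\chi\md P(\uq)}\Big|\sum_{n\sim N}a_{n}\chi(n)\Big|^{2},
\]
and this is exactly the kind of expression to which the polynomial large sieve of \cite{KH} applies, since the moduli $P(\uq)=\prod_{i=1}^{k}(q_{u(i)}^{2}+q_{v(i)}^{2})$ are values of a polynomial of degree $2k$ in $\ell$ variables. The saving of size $\sigma=1/(4kr)$ supplied by \cite{KH}, which ultimately descends from the progress of \cite{PPW} on Vinogradov's mean value theorem, is stronger than what one would get by treating the moduli as generic integers, and this is precisely what enlarges the permissible exponent of $Q$ in \eqref{eq:QIV} beyond the range available from standard arguments.

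The main obstacle I anticipate is the book-keeping required to reduce to primitive characters in a way that still respects the polynomial structure: several tuples $\uq$ may produce the same value $P(\uq)$, and the various conductors $d\mid P(\uq)$ must be re-indexed so that the resulting bilinear form is fed into \cite{KH} with the correct weights and without losing powers of $\log x$. Handling this while preserving the $G_{\uq}$ weights (which restrict the sum to squarefree $P(\uq)$ whose sum-of-two-squares factors are prime, i.e.\ to genuine Gaussian-prime moduli), and then carrying out the final optimisation in $U$, $N$, and the Vaughan parameters to produce precisely the upper bound $x^{(1/3-2\eps)/(2k-\sigma)}$ on $Q$ in \eqref{eq:QIV}, will be the most delicate part of the argument.
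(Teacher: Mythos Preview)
Your outline matches the paper's strategy almost exactly: orthogonality reduction to characters, passage to primitive characters, Vaughan's identity with $U=x^{1/3}$, P\'olya--Vinogradov on the Type~I piece, and the polynomial large sieve of \cite{KH} on the bilinear piece, packaged into a ``polynomial Basic Mean Value Theorem'' bounding $\sum_{\uq\sim Q}\frac{P(\uq)}{\ph(P(\uq))}\sideset{}{^{*}}\sum_{\chi}\sup_{y\le x}|\psi(y,\chi)|$.

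Two points deserve correction. First, the lower bound $Q\gg x^{\eps/\sigma}$ in \eqref{eq:QIV} does \emph{not} come from Siegel--Walfisz. In this setting there are no small conductors other than $d=1$ (every prime factor of $P(\uq)$ exceeds $2Q^{2}$), and the $d=1$ contribution is handled by the ordinary prime number theorem. The lower bound arises instead from the term $Q^{\ell-\sigma}x$ in the polynomial large sieve bound $\Delta(Q,x)$: after dividing by $Q^{\ell}$ one needs $Q^{-\sigma}x\ll x^{1-\delta}$, i.e.\ $Q\gg x^{\delta/\sigma}$.

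Second, the obstacle you flag---re-indexing the conductors $d\mid P(\uq)$ so that the polynomial large sieve still applies---is resolved in the paper by a structural observation you have not yet made, and which is really the reason the weights $G_{\uq}$ were chosen as they are. Since $G_{\uq}$ forces $P(\uq)$ to be squarefree with each factor $q_{u(i)}^{2}+q_{v(i)}^{2}$ prime, every divisor $d>1$ of $P(\uq)$ is itself of the form $\tilde P(\uq)$ for some sub-product polynomial $\tilde P\mid P$ in $\Z[\ux]$ of the same shape (a product of a subset of the quadratic factors). There are only $O_{k}(1)$ such $\tilde P$, and for each one the Basic Mean Value Theorem applies with $\deg\tilde P\le 2k$ and a corresponding $\sigma(\tilde P)\ge\sigma$, so the admissible $Q$-range for every $\tilde P$ contains the range \eqref{eq:QIV}. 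Summing over $\tilde P\mid P$ then gives the result. Without this observation one cannot feed the conductor-$d$ contribution back into \cite{KH}, and the argument stalls exactly where you anticipated.
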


In the weights $G_{\uq}$, the $\Lambda$-arguments are all
primes $\equiv 1\md 4$ by Gauss' theorem on the sum of
two squares, 
and $P(\uq)$ is a squarefree number composed of such Gaussian primes.
Hence we consider certain subsets of
\begin{multline*}
  \{p_{1}\cdots p_{k};\text{ all } p_{i}\equiv 1\md 4 
 \text{ prime,}\\ \text{pairwise different and } 2Q^{2}<p_{i}\leq 8Q^{2} \}
\end{multline*}
as moduli, which is the set of squarefree products of $k$ Gaussian 
primes from a certain interval of length $6Q^{2}$.
Note that the subsets we consider
become quite sparse if the degree $2k$ of
$P$ is bigger than the number $\ell$ of variables.
In that case, the estimate in Theorem~\ref{th:BV} can
not be deduced directly by applying 
the classical Theorem~\ref{thm:BVclass}
due to the assigned
weights that reflect the sparsity of the moduli. 
In other words, Theorem~\ref{th:BV} takes the distribution of the
moduli into account,
whereas the classical theorem only sees the size of the moduli.

It is clear from the proof that the range with exponent $1/6k$ 
in \eqref{eq:QIV}
could also be obtained using a standard
approach. The improvement here is the term $3\sigma$ coming from the
benefit of the polynomial large sieve inequality 
from \cite{KH}. So to speak, it breaks the ``$1/6k$-barrier''.
Whether this ``$1/6k$-barrier'' is really such a hard barrier 
is not that clear since by the classical theorem, 
one might heuristically expect a hard barrier at $1/4k$.

A similar phenomenon is already known from the
literature in  the case of polynomials 
in one variable of degree $d$; in that case, heuristically,
$1/2d$ might be reached. This has been
investigated by  Elliott in \cite{Ell}, 
who proved such a Bombieri--Vinogradov-type
theorem with exponent $1/4d$ and gave evidence that 
one might be able to reach $1/3d$ by further improvements,
though the barrier of the method seems to
be at $1/4d$. Later,
Mikawa and Peneva \cite{MikPen} improved the exponent to $8/19d$, and
Baker \cite{Bak} to $9/20d$.

For the polynomials of degree $d=2k$ considered in this article,
Theorem~\ref{th:BV} confirms the exponent $1/3d$,
even improving it in a way depending on $\sigma$.
Note however, that we always have at least two variables.
It is not clear yet whether the improvements in \cite{MikPen, Bak}
can be extended to a several variable setting as in the present article.
The proof of Theorem~\ref{th:BV} relies on the classical 
Fourier Analysis approach, whereas deeper such techniques are used 
in \cite{MikPen} and \cite{Bak}.
Elliott's argument in \cite{Ell} is a largely self-contained 
careful application of Linnik's
Dispersion Method, without appeal to Fourier Analysis.

We continue by giving some important comments on our choice
of the moduli.
 
Firstly, the proof of the polynomial version of the Basic Mean
Value Theorem in Section~\ref{sec:polyBMVT} does \emph{not} 
depend on this choice: it works for arbitrary polynomials
$P$ of degree $k$ in $\ell$ variables, assuming only 
that the biggest value $M_{Q}$ and smallest value $m_{Q}$ of $P$
in the dyadic $Q$-box $\uq\sim Q$ are such that 
$Q^{k}\ll m_{Q}\leq M_{Q}\ll Q^{k}$ holds for $P$.
In the proof, one needs primes $p=q_{u}^{2}+q_{v}^{2}$
with $q_{u},q_{v}$ of similar size.
But then, Theorem~\ref{th:BV} and its proof 
rely on the special structure of the moduli:
we use that each divisor is again 
of such a form so that the polynomial basic mean value theorem
can be used iteratively.

Secondly, one should make clear that the estimate in our 
Theorem~\ref{th:BV} is nontrivial in the sense that
the number of moduli is big enough and not too sparse,
so that it can not be deduced using the trivial estimate
$E(y;P(\uq),a)\ll y/\ph(\uq)$.

This would indeed follow from the following conjecture.

\begin{conjecture}[Number of moduli]
\label{conj:num}
Consider $u,v,P$ as in Theorem~\ref{th:BV}.
There exists a constant $C>0$ depending on $k$ and $\ell$ only such that
\[
   \sum_{\uq\sim Q} \mu^{2}(P(\uq))
   \Lambda(q_{u(1)}^{2}+q_{v(1)}^{2})\cdots
   \Lambda(q_{u(k)}^{2}+q_{v(k)}^{2}) \gg_{k,\ell} \frac{Q^{\ell}}{(\log Q)^{C}}.
\]
\end{conjecture}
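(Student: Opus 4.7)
The plan is first to dispose of the weight $\mu^{2}(P(\uq))$, reducing to a lower bound for a product of $\Lambda$-factors, and then to analyse that product through the Gaussian-integer structure $q_{u(i)}^{2}+q_{v(i)}^{2}=|q_{u(i)}+iq_{v(i)}|^{2}$.

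\textbf{Removing the $\mu^{2}$-weight.} On the support of $\prod_{i}\Lambda(q_{u(i)}^{2}+q_{v(i)}^{2})$ each factor is already a prime power of size $\sim Q^{2}$, so $\mu^{2}(P(\uq))$ can only fail if some factor is a proper prime power or if two of the factors coincide. Both situations are thin: there are only $O(Q)$ prime powers $p^{j}\sim Q^{2}$ with $j\ge 2$, and the diagonal $q_{u(i)}^{2}+q_{v(i)}^{2}=q_{u(j)}^{2}+q_{v(j)}^{2}$ cuts out a nondegenerate hypersurface whenever $\{u(i),v(i)\}\ne\{u(j),v(j)\}$. Each event contributes at most $O(Q^{\ell-1})$ tuples (up to a log), which is absorbed into the $(\log Q)^{C}$ slack, so it remains to prove the same lower bound for
\[
   S:=\sum_{\uq\sim Q}\prod_{i=1}^{k}\Lambda(q_{u(i)}^{2}+q_{v(i)}^{2}).
\]

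\textbf{The disjoint case.} When the $k$ pairs $\{u(i),v(i)\}$ are pairwise disjoint (which forces $2k\le\ell$), the sum $S$ factorises as $Q^{\ell-2k}$ (contribution of the free variables) times $k$ independent two-variable Landau-type sums $\sum_{a,b\sim Q}\Lambda(a^{2}+b^{2})$. Each of these is $\gg Q^{2}$ by Landau's theorem on primes of the form $a^{2}+b^{2}$, combined with Hecke's angular equidistribution of Gaussian primes (guaranteeing that a positive proportion of the relevant primes admit a representation with both coordinates in $[Q,2Q]$). This already gives the sharper bound $S\gg Q^{\ell}$ with no logarithmic loss.

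\textbf{The overlapping case --- the main obstacle.} The serious difficulty arises when some variable lies in two or more pairs, e.g.\ requiring $q_{1}^{2}+q_{2}^{2}$ and $q_{1}^{2}+q_{3}^{2}$ to be simultaneously prime. Such a simultaneous-prime statement for several quadratic forms in several variables is of Bateman--Horn type and is open in general. The plan here is to apply a lower-bound sieve (a suitably weighted Selberg sieve, or a vectorial $\beta$-sieve) to the tuple $(z_{1},\dots,z_{k})$ of Gaussian integers $z_{i}=q_{u(i)}+iq_{v(i)}$; positivity of all local densities should then yield $S\gg_{k,\ell}Q^{\ell}(\log Q)^{-C(k,\ell)}$, the $(\log Q)^{-C}$ absorbing the sieve's inefficiency. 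The hard part will be the level of distribution required: one needs a Bombieri--Vinogradov-type equidistribution for Gaussian primes, uniform in the free variables, strong enough to dominate the sieve remainder term. For sufficiently simple pair graphs (bipartite or tree-like) this should be within reach of existing equidistribution results in $\Z[i]$, but an unconditional proof valid for arbitrary admissible $u,v$ appears to be genuinely out of reach, and in my view constitutes the central obstacle to the conjecture.
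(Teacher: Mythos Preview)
The statement you were given is a \emph{conjecture}, and the paper does not prove it in full generality; it only establishes the special case recorded as Theorem~\ref{th:cases}, where the pair graph can be ``peeled'' one vertex at a time (for each $i$, one of $u(i),v(i)$ is absent from all later pairs). Your proposal correctly recognises that the general overlapping case is a Bateman--Horn-type statement that is currently out of reach, so in that sense your assessment agrees with the paper's.

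Where the comparison is informative is in the tractable range. Your ``disjoint case'' (the $k$ pairs share no variable) is strictly narrower than what the paper actually proves. The paper's argument for Theorem~\ref{th:cases} first removes the $\mu^{2}$-weight essentially as you do, and then proceeds by induction on $k$: at each step the hypothesis guarantees a variable, say $q_{u(1)}$, appearing in only one factor, so one sums over $q_{u(1)}$ for fixed $q_{v(1)}$ and applies the Fouvry--Iwaniec theorem
\[
\sum_{\ell^{2}+m^{2}\le x}\lambda_{\ell}\,\Lambda(\ell^{2}+m^{2})
=\frac{4c}{\pi}\sum_{\ell^{2}+m^{2}\le x}\lambda_{\ell}\,\theta(\ell)
+O_{A}\bigl(x(\log x)^{-A}\bigr),
\]
with $\lambda_{\ell}$ encoding the normalised inner $(k-1)$-fold sum. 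This is exactly the ``existing equidistribution result in $\Z[i]$'' you allude to, and it handles overlapping (tree-like) configurations that your Landau/Hecke treatment of the disjoint case does not reach on its own. So your plan is pointing in the right direction, but the concrete input you need for the accessible cases is Fouvry--Iwaniec together with an inductive peeling, not a multidimensional sieve; the sieve route you sketch for the fully general case remains, as you say, open.
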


As a second result of this article, we confirm this conjecture 
in certain cases, namely
when not too many of the Gaussian primes 
share a summand $q_{i}^{2}$.

\begin{theorem}[Special cases]
  \label{th:cases}  
Assume that the maps $u,v:\{1,\dots,k\}\to\{1,\dots,\ell\}$ are such
that for each $i\leq k$, one of the numbers $u(i)$ and $v(i)$ 
does not occur in the set $\{u(i+1),\dots,u(k), v(i+1),\dots,v(k)\}$.
Then the assertion in Conjecture~\ref{conj:num} holds true.
\end{theorem}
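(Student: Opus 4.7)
The plan is to interpret the hypothesis graph-theoretically and induct on $k$. Viewing $\{\{u(i),v(i)\}:i\leq k\}$ as the edge set of a graph $G$ on $\{1,\ldots,\ell\}$ (any loop $u(i)=v(i)$ forces the corresponding $\Lambda$-factor to vanish and may be discarded), the hypothesis is exactly the leaf-peeling characterisation of a forest. Variables belonging to distinct connected components of $G$ appear in disjoint $\Lambda$-factors, so the sum factorises across the trees of $G$, and it suffices to prove
\[
S_{T}:=\sum_{\uq}\mu^{2}\Bigl(\prod_{\{i,j\}\in T}(q_{i}^{2}+q_{j}^{2})\Bigr)\prod_{\{i,j\}\in T}\Lambda(q_{i}^{2}+q_{j}^{2})\gg \frac{Q^{|V(T)|}}{(\log Q)^{C_{T}}}
\]
for each tree $T$. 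The $\mu^{2}$ factor costs nothing to leading order: whenever $\prod\Lambda\neq 0$ each factor is already prime, so $\mu^{2}=1$ unless two of these primes coincide, and coincident pairs contribute only a strictly lower order term bounded by a straightforward divisor estimate.

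I then induct on the number of edges $k$ of $T$. The base case $k=1$ reduces to $\sum_{a,b\sim Q}\Lambda(a^{2}+b^{2})\gg Q^{2}/(\log Q)^{1/2}$, which is Iwaniec's lower bound for primes of the form $a^{2}+b^{2}$. For the inductive step, using the leaf-peeling ordering I extract the leaf edge $e=\{w,w'\}$ and write
\[
S_{T}=\sum_{\uq_{\neq w}}\Bigl[\prod_{e'\in T\setminus e}\Lambda_{e'}\Bigr]A(q_{w'}),\qquad A(b):=\sum_{a\sim Q}\Lambda(a^{2}+b^{2}).
\]
When $T$ contains a cherry (two leaves $w_{1},w_{2}$ sharing a parent $b$), the inner quantity equals $A(q_{b})^{2}$, and Cauchy--Schwarz rearranges to $S_{T}\geq S_{T\setminus\{w_{2}\}}^{2}/S_{T\setminus\{w_{1},w_{2}\}}$, where the numerator is controlled by the inductive hypothesis applied to the $(k-1)$-edge tree $T\setminus\{w_{2}\}$ and the denominator by the trivial upper bound $\ll Q^{|V|-2}(\log Q)^{k-2}$; this yields the recursion $C_{k}\leq 2C_{k-1}+k-2$.

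The main obstacle is the cherry-free case, exemplified by paths of length $\geq 3$, where no two leaves share a parent. For an even path $P_{2m}$ I would fold at the midpoint vertex, giving $S_{P_{2m}}=\sum_{q_{m}}F_{m}(q_{m})^{2}\geq S_{P_{m}}^{2}/Q$ by Cauchy--Schwarz with $F_{m}(q_{m})$ the partial sum over the left half, so that induction proceeds with $k\mapsto\lceil k/2\rceil$. An odd path $P_{2m+1}$ folds at the middle edge into
\[
S_{P_{2m+1}}=\sum_{q_{m},q_{m+1}}F_{m}(q_{m})F_{m}(q_{m+1})\Lambda(q_{m}^{2}+q_{m+1}^{2});
\]
this admits a lower bound in terms of $S_{P_{m}}^{2}$ after restricting to a large sublevel set $\{a:F_{m}(a)\gg \overline{F_{m}}\}$ (of positive density by averaging against the trivial upper bound $F_{m}\ll Q^{m}(\log Q)^{m-1}$) and invoking a Fouvry--Iwaniec-type estimate for $\sum\Lambda(a^{2}+b^{2})$ over restricted ranges. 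General cherry-free trees admit a hybrid leaf-and-fold reduction, at each step strictly shrinking the forest at the cost of only a polylogarithmic factor. Assembling the recursions yields Conjecture~\ref{conj:num} with a constant $C$ depending only on $k$ and $\ell$.
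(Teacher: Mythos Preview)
Your graph-theoretic reading of the hypothesis is correct and matches the paper's setup: the condition is precisely that the graph with edges $\{u(i),v(i)\}$ admits a leaf-peeling, i.e.\ is a forest, and the reduction to a single tree together with the removal of $\mu^{2}$ is fine. Induction on $k$ is also the right scaffold. Where the argument goes astray is in the inductive step itself.

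You sum over the leaf variable $q_{w}$ \emph{first}, producing $A(q_{w'})=\sum_{a\sim Q}\Lambda(a^{2}+q_{w'}^{2})$, and are then forced into the cherry/no-cherry dichotomy because you have no pointwise lower bound on $A$. The cherry case via Cauchy--Schwarz is valid, but the odd-path case is a genuine gap: after folding you need a lower bound for
\[
\sum_{a,b\in A}\Lambda(a^{2}+b^{2})
\]
with $A$ an \emph{arbitrary} set of density only $\gg(\log Q)^{-C}$ (not positive density as you state; averaging against the pointwise upper bound $F_{m}\ll Q^{m}(\log Q)^{O(1)}$ loses a log-power). This requires a \emph{bilaterally} weighted Fouvry--Iwaniec estimate. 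The theorem of Fouvry and Iwaniec (Theorem~\ref{th:FI}) carries an arbitrary bounded weight on only \emph{one} of the two squares; a version with independent weights on both is not available, and your ``Fouvry--Iwaniec-type estimate over restricted ranges'' does not supply it. The sentence on general cherry-free trees inherits the same obstruction and is in any case too vague to constitute a proof.

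The paper avoids all of this by summing in the opposite order. One keeps the leaf edge $\Lambda(q_{w}^{2}+q_{w'}^{2})$ as the outermost factor and absorbs \emph{everything else} into a single normalised weight on the parent,
\[
\lambda_{q_{w'}}:=(\log(8Q^{2}))^{1-k}Q^{2-\ell}\sum_{\uq'\sim Q}\prod_{e'\in T\setminus e}\Lambda_{e'}\leq 1,
\]
where $\uq'$ omits the coordinates $w$ and $w'$. Then $S_{T}$ is, up to this normalisation, exactly $\sum_{q_{w},q_{w'}\sim Q}\lambda_{q_{w'}}\Lambda(q_{w}^{2}+q_{w'}^{2})$, which is precisely the shape of Theorem~\ref{th:FI}. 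Its main term is $\gg Q\sum_{q_{w'}}\lambda_{q_{w'}}\theta(q_{w'})\gg (Q/\log\log Q)\sum_{q_{w'}}\lambda_{q_{w'}}$, and the inductive hypothesis gives $\sum_{q_{w'}}\lambda_{q_{w'}}\gg Q(\log Q)^{-C}$. No case distinction, no Cauchy--Schwarz, no folding, and no bilateral weights are needed; the one-sided weighted asymptotic is exactly strong enough to carry the induction in a single step for every tree.
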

To give an example, if the sequence of the pairs $(u(i),v(i))$
is $(1,2)$, $(2,3)$, $(3,4)$, we deal with the polynomial
$P(\ux)=(x_{1}^{2}+x_{2}^{2})(x_{2}^{2}+x_{3}^{2})(x_{3}^{2}+x_{4}^{2})$.
On the other hand, the sequence of pairs $(1,2),(2,3),(3,1)$ 
does not comply with the condition.
Clearly, if this condition holds, 
then $\ell\geq k+1$. However,
the degree $2k$ of the polynomial may still be bigger than the
number $\ell$ of variables, so that Theorem~\ref{th:BV}
is nontrivial. 

We prove Theorem~\ref{th:cases} in Section~\ref{sec:last}
by making use of the main theorem of Fouvry and Iwaniec in \cite{FI}.

Some other additional remarks on Theorem~\ref{th:BV}:
\begin{enumerate}[(i)]
\item 
Further improvements of Theorem~\ref{th:BV} 
could be made if the relevant
term $Q^{\ell-\sigma}N$ in the polynomial large sieve
inequality, which is dominant in the relevant ranges,
could be further improved. Ideas how this could be
reached, but showing also its difficulty, are discussed in
\cite[Sec.~5]{KH}.
\item
The restriction $\uq\sim Q$ can be
generalized to $R\ll \uq\ll Q$. In that case the estimate in 
Theorem~\ref{th:BV} holds with upper bound 
$Q\ll x^{1/6k-\eps}R^{\sigma/2k}$. 
Therefore the benefit coming from $\sigma$ melts if $R$
decreases. The theorem gives the biggest possible upper bound 
for $Q$ if $Q/R\ll 1$.
\item
It should be possible to obtain a power of $\log x$ instead of the
term $x^{\eps}$ in the range for $Q$ by working more precisely.
This would require a refinement of the used theorem \cite[Thm.~10.1]{PPW} of 
Parsel \emph{et al.}\ where $Q^{\eps}$ is replaced 
by a power of $\log Q$. 
\end{enumerate}

\zeile
\textbf{Notation.} Let $k,\ell$ denote positive integers and
let $\eps$ denote a 
positive real number.
In this article,
we suppress the dependence of the implicit constants
on $k$, $\ell$ or $\eps$ in our notation and simply write 
$\ll$ for $\ll_{k,\ell,\eps}$ or $\ll_{k,\ell}$.

For a real number $Q> 1$
the symbol $q\sim Q$ means $Q< q\leq 2Q$,
and the notation $\uq\sim Q$ means that the $\ell$-tuple 
$\uq=(q_{1},\dots,q_{\ell})$ of integers 
is contained in a dyadic $Q$-box, that is $q_{i}\sim Q$ for
$i=1,\dots,\ell$. 

For $\alpha\in\R$, the symbol $\e(\alpha):=\exp(2\pi i \alpha)$ denotes 
the complex exponential function.
The greatest common divisor is abbreviated by $\gcd$.
As usual, we denote 
the von Mangoldt function by $\Lambda$, Euler's totient function
by $\ph$, and the M\"obius function by $\mu$.

\section{Auxiliary tools}
\label{sec:polyLSI}

\begin{assumptions}\label{Assu}
Let $\ell$ be a positive integer and let $P\in\Z[x_{1},\dots,x_{\ell}]$
be a polynomial in $\ell$ variables of degree $k\geq 2$.
Let $\sigma:=1/(2rk)$ with $r:=\binom{k+\ell-1}{\ell}-1$,
and for a real number $Q> 1$
consider the $\ell$-tuples 
in the dyadic $Q$-box $\uq\sim Q$. 

Assume that $P$ takes only positive values in the $Q$-box
and that the biggest value $M_{Q}$ and smallest value $m_{Q}$ of $P$
in this box are such that 
$Q^{k}\ll m_{Q}\leq M_{Q}\ll Q^{k}$ holds for $P$.
\end{assumptions}

In \cite[Cor.~3]{KH}, we obtained the following
polynomial large sieve inequality.

\begin{theorem}[Polynomial large sieve inequality]
\label{polyLSI}
  Let $P$ be a polynomial as in Assumptions \ref{Assu},
  let $(v_{n})$ be a complex sequence, and let 
 \[S(\alpha):=\sum_{M<n\leq M+N} v_{n}\e(\alpha)\] and
  \[
     \Sigma=\Sigma_{Q,N,P}:= \sum_{\uq\sim Q} \sum_{\substack{1\leq
         a\leq P(\uq)\\\gcd(a,P(\uq))=1}}
      \bigg| S\Big(\frac{a}{P(\uq)}\Big)\bigg|^{2}.
  \]
  Then we have the bound
  \begin{equation}
    \label{eq:genpolyLSI}
    \Sigma \ll 
(QN)^{\eps} \cdot \Delta(Q,N) \cdot\sum_{M<n\leq M+N} |v_{n}|^{2}
 \end{equation}
with $\Delta(Q,N):=
Q^{k+\ell}+Q^{\ell-\sigma}N+Q^{\ell+k\sigma}N^{1-\sigma}$.
\end{theorem}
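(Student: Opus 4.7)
The plan is to establish \eqref{eq:genpolyLSI} by combining the duality principle for the large sieve with a careful count of near-coincidences among the fractions $a/P(\uq)$, the count itself being controlled by the multivariate Vinogradov mean value theorem of Parsell, Prendiville and Wooley \cite{PPW}.

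First, by the standard duality argument, the inequality \eqref{eq:genpolyLSI} is equivalent to the dual statement that for any complex coefficients $c_{\uq,a}$,
\[
\sum_{M<n\leq M+N}\Bigl|\sum_{\uq\sim Q}\sum_{\substack{a\\ \gcd(a,P(\uq))=1}} c_{\uq,a}\,\e\!\Bigl(\tfrac{an}{P(\uq)}\Bigr)\Bigr|^{2}
\ll (QN)^{\eps}\,\Delta(Q,N)\sum_{\uq,a}|c_{\uq,a}|^{2}.
\]
Expanding the square and swapping summations replaces the inner sum over $n$ by (essentially) $\min(N,\|\beta\|^{-1})$ with $\beta=a_{1}/P(\uq_{1})-a_{2}/P(\uq_{2})$, so after Cauchy--Schwarz the task reduces to bounding, for each fixed $(\uq_{1},a_{1})$, the number of $(\uq_{2},a_{2})$ with
\[
|a_{1}P(\uq_{2})-a_{2}P(\uq_{1})|\leq P(\uq_{1})P(\uq_{2})/N.
\]

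The analytic input controlling this count is Vinogradov's mean value theorem for the system of \emph{all} nonconstant monomials of total degree $\le k$ in $\ell$ variables. Writing $P(\uq)=\sum_{\mathbf{e}}c_{\mathbf{e}}\uq^{\mathbf{e}}$ and absorbing the coefficients $c_{\mathbf{e}}$ into the frequency vector, one sees that the near-coincidence count is controlled by the $2s$-th moment
\[
J_{s}(Q):=\int_{[0,1]^{r}}\Bigl|\sum_{\uq\sim Q}\e\!\Bigl(\sum_{\mathbf{e}}\alpha_{\mathbf{e}}\uq^{\mathbf{e}}\Bigr)\Bigr|^{2s}d\alpha,
\]
where $\alpha_{\mathbf{e}}$ ranges over the $r=\binom{k+\ell-1}{\ell}-1$ nonconstant monomials. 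Applying the Parsell--Prendiville--Wooley bound on $J_{s}(Q)$ at (and near) its critical exponent is what produces the numerical saving $\sigma=1/(2rk)$ appearing in \eqref{eq:genpolyLSI}.

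Unpacking the shape of $\Delta(Q,N)$: the diagonal contribution $\uq_{1}=\uq_{2}$, $a_{1}=a_{2}$ gives the $Q^{k+\ell}$ term; the off-diagonal count, after full use of the MVT-saving, produces $Q^{\ell-\sigma}N$; and a H\"older-type interpolation between the trivial spacing estimate and the MVT bound yields the intermediate term $Q^{\ell+k\sigma}N^{1-\sigma}$, which is needed when $N$ is of moderate size relative to $Q^{k}$. I expect the main obstacle to lie precisely in this translation between the near-collision count for $a/P(\uq)$ and the moment $J_{s}(Q)$: in the several-variable setting the parametrization must be set up so that collisions correspond exactly to a moment of an exponential sum whose frequencies run over \emph{all} monomials of degree $\le k$ (not just over the single polynomial $P$), and the size and positivity hypotheses on $P$ from Assumptions~\ref{Assu} must be used to guarantee uniformity of the counting across the dyadic box $\uq\sim Q$.
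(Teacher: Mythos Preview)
The paper does not prove Theorem~\ref{polyLSI} at all: it is quoted verbatim as \cite[Cor.~3]{KH}, an earlier paper of the author, and is used here only as a black box. So there is no ``paper's own proof'' to compare against.

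That said, your sketch is a faithful outline of the strategy carried out in \cite{KH}: duality reduces the large sieve to a spacing/near-collision problem for the fractions $a/P(\uq)$, and the counting is then controlled via the Parsell--Prendiville--Wooley mean value estimate for the full monomial system, which is where the saving $\sigma=1/(2rk)$ originates. Your identification of the three terms in $\Delta(Q,N)$ (diagonal, full MVT saving, interpolated term) is also on target. What you have written is a plan rather than a proof---the passage from the near-collision count to the moment $J_{s}(Q)$ and the precise choice of $s$ that optimizes the exponents are not spelled out---but since the present paper treats the result as an external input, a detailed proof here would be out of scope anyway.
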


Further, the following Lemmas are standard tools in the proof
of the classical Bombieri--Vinogradov Theorem: their proofs
can be found in the literature, see e.\,g.\ \cite{VauT}.

\begin{lemma}[Consequence of Vaughan's identity]
\label{Vauid}
  Let $U,x\geq 1$, $U^{2}\leq x$, $f:\N\to\C$. Then
\[
   \sum_{U<n\leq x}f(n)\Lambda(n) \ll (\log x)T_{1}+T_{2}+T_{3}
\]
with
\[
   T_{1}=\sum_{\ell\leq U} \max_{w} 
   \Big|\sum_{w<k\leq x/\ell} f(k\ell) \Big|
\]
and
\[
  T_{i}=\Big|\sum_{U<m\leq\max\{x/U,U^{2}\}} a_{i}(m) b_{i}(k) f(mk)
\Big| \text{ for }i=2,3,
\]
where $a_{i}(m)$, $b_{i}(k)$ are arithmetic functions depending
on $U$ only and $|b_{i}(k)|\leq\sum_{d\mid k}1$, $|a_{i}(k)|\leq \log
k$ for all $k\in\N$.
\end{lemma}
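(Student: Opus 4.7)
The plan is to apply Vaughan's identity and repackage the resulting bilinear sums according to the Type~I / Type~II dichotomy reflected in $T_{1}, T_{2}, T_{3}$. Starting from the convolution identity $\Lambda = \mu * \log$ together with the auxiliary relation $\log = \Lambda * 1$, one truncates both $\mu$ and $\Lambda$ at the level $U$; after a short algebraic manipulation one obtains, for every $n > U$, a decomposition
\[
\Lambda(n) = A_{U}(n) - B_{U}(n) + C_{U}(n),
\]
in which $A_{U}$ and $B_{U}$ are Type~I pieces (each involving a divisor of $n$ constrained to lie below $U$, resp.\ below $U^{2}$) and $C_{U}$ is a Type~II bilinear piece whose two variables both exceed $U$.

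Next I would substitute this decomposition into $\sum_{U < n \leq x} f(n)\Lambda(n)$ and swap summation in each of the three pieces. After grouping the small divisors into a single variable $\ell \leq U^{2}$, the $A_{U}$ and $B_{U}$ contributions take the form $\sum_{\ell \leq U^{2}}\alpha(\ell)\sum_{k} f(\ell k)$ with $|\alpha(\ell)|\ll \log \ell$ (the $\log(n/b)$-factor) or $|\alpha(\ell)|\ll \sum_{d\mid \ell}1$ (packaging of two small divisors). Introducing the supremum over the inner cut-off $w$ and absorbing the remaining logarithmic factor then bounds this contribution by $(\log x)T_{1}$.

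The Type~II piece $C_{U}$ I would split according to the two regimes $U < m \leq U^{2}$ and $U^{2} < m \leq x/U$, which together cover the full range, and then subdivide each dyadically. This produces $O(\log x)$ bilinear sums of exactly the shape defining $T_{2}$ and $T_{3}$, with $a_{i}(m)$ of the form $\Lambda(m)$ or a truncated M\"obius-convolution (so $|a_{i}(m)|\leq \log m$), and $b_{i}(k)$ of the form $\sum_{b\mid k,\,b\leq U}\mu(b)$ (so $|b_{i}(k)|\leq \sum_{d\mid k}1$).

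The main obstacle is combinatorial rather than analytic: one must verify that after the interchange of summations and the dyadic splittings every piece lands in the prescribed range $m\in (U,\max\{x/U,U^{2}\}]$ and carries coefficients satisfying the stated pointwise bounds, and that no cross-term escapes the Type~I / Type~II dichotomy. The logarithmic losses from the dyadic decomposition can be absorbed either into the implicit constants or into the $\log x$ factor preceding $T_{1}$. This repackaging is standard and carried out in full detail in \cite{VauT}.
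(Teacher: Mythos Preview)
The paper does not supply a proof of this lemma; it merely cites \cite[Satz~6.1.1]{Brue} and remarks that the statement follows easily from Vaughan's identity. Your sketch follows the standard derivation and is broadly correct, but two points in the bookkeeping need adjustment.

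First, the ``double-small'' piece $B_{U}(n)=-\sum_{bc\mid n,\,b,c\le U}\mu(b)\Lambda(c)$ has combined small divisor $\ell=bc$ ranging up to $U^{2}$, and you cannot place all of it into $T_{1}$, whose sum runs only over $\ell\le U$. Only the part with $\ell\le U$ feeds into $T_{1}$; the remainder with $U<\ell\le U^{2}$ must be recast as a bilinear sum in $m=\ell$ and $k$ with coefficients $a(m)=\sum_{bc=m,\,b,c\le U}\mu(b)\Lambda(c)$ (so $|a(m)|\le\sum_{c\mid m}\Lambda(c)=\log m$) and $b(k)=1$. This accounts for one of $T_{2},T_{3}$ and is precisely why the upper endpoint in their definition is $\max\{x/U,U^{2}\}$ rather than just $x/U$; the genuine Type~II piece $C_{U}$ supplies the other.

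Second, no dyadic splitting of the Type~II range is needed here: $T_{2}$ and $T_{3}$ already carry the full range $U<m\le\max\{x/U,U^{2}\}$, so the decomposition yields exactly two bilinear sums, not $O(\log x)$ of them. A dyadic subdivision would cost an extra factor $\log x$ on the Type~II side, which the stated bound does not allow (the factor $\log x$ multiplies only $T_{1}$), and this loss cannot be ``absorbed into the $\log x$ factor preceding $T_{1}$'' as you suggest, since $T_{1}$ and $T_{2},T_{3}$ are separate summands.
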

Lemma~\ref{Vauid} is presented as 
\cite[Satz~6.1.1]{Brue} in the book of Br\"udern. It can be deduced 
easily from the widely-known Vaughan identity.

\zeile
\begin{lemma}[Polya--Vinogradov's inequality]
\label{pvi} Let $w<z$ be real.
  For any nonprincipal character $\chi$ mod $q>1$ we have
\[
   \sum_{w<k\leq z} \chi(k) \ll q^{1/2}\log q.
\]
\end{lemma}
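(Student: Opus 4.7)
The plan is to prove the Polya--Vinogradov inequality via the standard Fourier/Gauss-sum approach. First I would reduce to the case of a primitive character: if $\chi$ is a non-principal character mod $q$ induced from a primitive character $\chi^{*}$ mod $q^{*}$ with $q^{*}\mid q$, then $\chi(k)=\chi^{*}(k)\cdot\mathbf{1}_{\gcd(k,q)=1}$, and expanding the coprimality condition via M\"obius inversion reduces the sum $\sum_{w<k\le z}\chi(k)$ to a short combination of sums of $\chi^{*}$ over progressions, at the cost of a factor that is absorbed into the $\log q$. So the core task is to bound $\sum_{w<k\le z}\chi^{*}(k)$ for primitive $\chi^{*}$ mod $q^{*}>1$.

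Next, for primitive $\chi$ mod $q$, I would use the Fourier expansion on $\Z/q\Z$. The Gauss sum identity gives $\chi(k)=\tau(\chi)^{-1}\sum_{a=1}^{q}\bar\chi(a)\e(ak/q)$, where $|\tau(\chi)|=\sqrt{q}$. Substituting this representation yields
\[
\sum_{w<k\le z}\chi(k)=\frac{1}{\tau(\chi)}\sum_{a=1}^{q-1}\bar\chi(a)\sum_{w<k\le z}\e(ak/q),
\]
where the term $a=q$ drops out since $\chi$ is non-principal. The inner geometric progression is bounded by $\min(z-w,\tfrac{1}{2\|a/q\|})$, where $\|\cdot\|$ denotes distance to the nearest integer.

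The final step is to estimate trivially $|\bar\chi(a)|\le 1$, pull the factor $|\tau(\chi)|^{-1}=q^{-1/2}$ in front, and bound
\[
\sum_{a=1}^{q-1}\min\!\Big(z-w,\frac{1}{2\|a/q\|}\Big)\ll \sum_{1\le a\le q/2}\frac{q}{a}\ll q\log q,
\]
giving the desired estimate $\ll q^{1/2}\log q$. The only mildly technical step is the reduction from arbitrary non-principal $\chi$ to the primitive case; this is routine but is where one must be careful to confirm that the induced divisor sum from $\gcd(k,q)=1$ contributes only an additional logarithmic factor rather than a power of $q$. Everything else is a direct application of Parseval-type Fourier inversion on $\Z/q\Z$ combined with the sharp Gauss sum bound.
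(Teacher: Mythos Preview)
The paper does not actually prove Lemma~\ref{pvi}; it is listed among the ``standard tools'' whose proofs are deferred to the literature (e.g.\ \cite{VauT}). Your sketch is the classical Gauss-sum argument and is correct: for primitive $\chi$ the identity of Lemma~\ref{chiformula} (which the paper also quotes) gives exactly the Fourier expansion you use, and the geometric-series bound $\sum_{a=1}^{q-1}\|a/q\|^{-1}\ll q\log q$ finishes that case.

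One small remark on the reduction step: when $\chi\bmod q$ is induced from the primitive $\chi^{*}\bmod f$, writing $q'$ for the product of primes dividing $q$ but not $f$, M\"obius inversion gives
\[
\sum_{w<k\le z}\chi(k)=\sum_{d\mid q'}\mu(d)\chi^{*}(d)\sum_{w/d<m\le z/d}\chi^{*}(m),
\]
so the loss is a factor $2^{\omega(q')}$, not merely a logarithm. This is nonetheless harmless: since $fq'\mid q$ and $2^{\omega(n)}\ll n^{1/2}$ uniformly for squarefree $n$, one gets $f^{1/2}2^{\omega(q')}\ll q^{1/2}$, and together with $\log f\le\log q$ the bound $\ll q^{1/2}\log q$ follows. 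So your caution about that step is well placed, but the outcome is even cleaner than ``an additional logarithmic factor''.
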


\begin{lemma}[Formula for $\chi(n)$]
\label{chiformula}
  Let $q\geq 1$. Then for all $n\in\Z$ and all primitive characters
$\chi$ mod $q$ we have
\[
\chi(n)\tau(\overline{\chi}) = \sum_{h\md q} 
\overline{\chi}(h)\e(hn/q), 
\]
where $\tau(\chi):=\sum_{a=1}^{q}\chi(a)e(a/q)$ is the
Gaussian sum. We have $|\tau(\chi)|=\sqrt{q}$ for primitive $\chi$ mod
$q$.
\end{lemma}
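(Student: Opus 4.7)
The plan is to verify the identity in two cases according to the value of $d:=\gcd(n,q)$, and then deduce $|\tau(\chi)|=\sqrt{q}$ from a second-moment computation. First, if $\gcd(n,q)=1$, I let $\bar n$ denote the multiplicative inverse of $n$ modulo $q$ and perform the substitution $h\mapsto \bar n h$, which is a bijection of $\Z/q\Z$. Complete multiplicativity of $\overline{\chi}$ yields $\overline{\chi}(\bar n h)=\chi(n)\overline{\chi}(h)$, while the exponential simplifies to $\e((\bar n h)n/q)=\e(h/q)$ since $\bar n n\equiv 1\pmod q$. Factoring $\chi(n)$ out of the sum on the right-hand side then gives $\chi(n)\tau(\overline{\chi})$, matching the left-hand side.

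The only delicate step is the case $d:=\gcd(n,q)>1$, where the left-hand side vanishes because $\chi(n)=0$. To show the right-hand sum also vanishes I invoke the primitivity of $\chi$ in the form: for the proper divisor $q_{1}:=q/d$ there exists $a\in\Z$ with $\gcd(a,q)=1$, $a\equiv 1\pmod{q_{1}}$, and $\chi(a)\neq 1$ (equivalently, $\chi$ is nontrivial on the kernel of the reduction $(\Z/q\Z)^{\times}\to(\Z/q_{1}\Z)^{\times}$, by definition of primitivity). The substitution $h\mapsto a h$ extracts the factor $\overline{\chi}(a)$ from the sum, and since $q_{1}\mid(a-1)$ and $d\mid n$ the exponent $(a-1)hn/q$ is an integer, so $\e(ahn/q)=\e(hn/q)$. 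The sum thus equals $\overline{\chi}(a)$ times itself and must vanish, as $\overline{\chi}(a)\neq 1$. The existence of the residue $a$ is the main (and only) place where primitivity is used, and it is the technical crux of the proof.

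For the norm statement I would evaluate $\sum_{n\bmod q}|\chi(n)\tau(\overline{\chi})|^{2}$ in two ways. Directly, it equals $\ph(q)\,|\tau(\overline{\chi})|^{2}$. Using the identity just proved and expanding the square, it rewrites as $\sum_{n\bmod q}\sum_{h_{1},h_{2}}\overline{\chi}(h_{1})\chi(h_{2})\e((h_{1}-h_{2})n/q)$; applying orthogonality of additive characters modulo $q$, namely that $\sum_{n\bmod q}\e((h_{1}-h_{2})n/q)$ equals $q$ when $h_{1}\equiv h_{2}\pmod q$ and $0$ otherwise, collapses this to $q\sum_{h\bmod q}|\chi(h)|^{2}=q\ph(q)$. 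Equating the two expressions yields $|\tau(\overline{\chi})|^{2}=q$, and the same identity for $|\tau(\chi)|$ follows either by applying the result with $\overline{\chi}$ in place of $\chi$ or by noting $\overline{\tau(\chi)}=\chi(-1)\tau(\overline{\chi})$.
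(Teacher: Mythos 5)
Your proof is correct. The paper itself gives no proof of Lemma~\ref{chiformula}: it is listed in Section~2 among standard auxiliary tools whose proofs are delegated to the literature (e.g.\ \cite{VauT}), so there is no in-paper argument to compare against. Your argument is the standard textbook one — the case $\gcd(n,q)=1$ by substituting $h\mapsto \bar n h$, the case $\gcd(n,q)=d>1$ by invoking the characterization of primitivity (nontriviality on the kernel of reduction mod $q/d$) to produce an $a\equiv 1\pmod{q/d}$ with $\chi(a)\neq 1$ and force the sum to vanish, and the evaluation $|\tau(\chi)|=\sqrt q$ by a second-moment computation combined with orthogonality of additive characters. All the details you highlight as delicate (the integrality of $(a-1)hn/q$, the equivalence defining primitivity, passing from $\tau(\overline\chi)$ to $\tau(\chi)$) are handled correctly.
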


\begin{lemma}[Get rid of $mn\leq X$]
\label{ridofrestr}
  Let $T,X> 1$ be real, $M,N\geq 1$ be integers and 
let $(\gamma_{n})$, $(\eta_{n})$ be complex
sequences. Then
\begin{multline}
    \sum_{\substack{m\leq M, n\leq N \\ mn\leq X}} \gamma_{m} \eta_{n} 
   \ll \int_{-T}^{T} \Big| \sum_{m\leq M} \gamma_{m} m^{it}
   \sum_{n\leq N} \eta_{n} n^{it} \Big| \min(|t|^{-1}, \log (2MN)) dt \\
   + MNT^{-1}\sum_{m\leq M, n\leq N} |\gamma_{m} \eta_{n}|.
 \end{multline}
\end{lemma}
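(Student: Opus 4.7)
The standard route is the truncated Perron formula: for $y>0$, $y\neq 1$, $c>0$, and $T>0$,
\[
\frac{1}{2\pi i}\int_{c-iT}^{c+iT}\frac{y^{s}}{s}\,ds = \mathbf{1}_{\{y>1\}} + O\!\left(\frac{y^{c}}{T\,|\log y|}\right),
\]
supplemented by the trivial estimate $O(y^{c})$ when $|\log y|$ is small. My plan is to apply this with $y=X/(mn)$ and to choose $c=1/\log(2MN)$, so that $m^{c}$, $n^{c}$, $X^{c}$ are all uniformly $O(1)$ throughout the ranges of interest.

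Summing the Perron identity against $\gamma_{m}\eta_{n}$ over $m\leq M$, $n\leq N$ and interchanging the finite sums with the integral yields
\[
\sum_{\substack{m\leq M,\,n\leq N\\ mn\leq X}}\gamma_{m}\eta_{n}
= \frac{1}{2\pi i}\int_{c-iT}^{c+iT}\frac{X^{s}}{s}\,A(s)B(s)\,ds + E,
\]
where $A(s):=\sum_{m\leq M}\gamma_{m}m^{-s}$, $B(s):=\sum_{n\leq N}\eta_{n}n^{-s}$, and $E$ collects the accumulated Perron errors. Parametrizing $s=c+it$, the elementary bound $|1/s|\leq \min(|t|^{-1},1/c) = \min(|t|^{-1},\log(2MN))$, combined with $|X^{s}|\ll 1$ and $m^{c},n^{c}\ll 1$, lets me recognize the main integral as the first term on the right side of the lemma (after sending $t\mapsto -t$, which is absorbed in the absolute value and converts $m^{-it}$ into $m^{it}$).

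For the error $E$, pairs $(m,n)$ with $|\log(X/mn)|$ bounded away from $0$ contribute at most $\ll T^{-1}\sum|\gamma_{m}\eta_{n}|$, which fits comfortably inside the second term of the lemma. The singular zone is the set of $(m,n)$ with $mn$ close to $X$: here the Perron error $1/|\log(X/mn)|$ blows up and the plan is to fall back on the trivial pointwise bound $O(1)$ per pair while counting the number of such pairs by dyadically grouping $\log(mn)$ near $\log X$. Their contribution then amounts to at most $\ll MN\,T^{-1}\sum|\gamma_{m}\eta_{n}|$, matching the second term.

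The technical heart of the argument is exactly this near-$X$ region: the singularity of $1/|\log(X/mn)|$ at $mn=X$ is precisely the reason the error term in the lemma carries the factor $MN$ rather than merely $\sum|\gamma_{m}\eta_{n}|$. Everything else—applying Perron, swapping the finite sum with the integral, and parametrizing the vertical contour $\Re s=c$—is routine bookkeeping that one finds in any textbook proof of a truncated Perron estimate, e.g.\ in the references \cite{VauT}, \cite{Brue} cited earlier in the paper.
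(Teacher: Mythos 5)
Your plan to use a Perron-type contour integral is the right general idea, but as written there are two concrete gaps, and the second one shows that your approach on the line $\Re s=c>0$ is not quite the one that gives the lemma in the form stated.

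First, the unwanted twist by $m^{-c}$. With $c=1/\log(2MN)$ you get $A(s)=\sum_{m\le M}\gamma_m m^{-c}m^{-it}$; after $t\mapsto -t$ this is $\sum_{m\le M}\gamma_m m^{-c}m^{it}$, not $\sum_{m\le M}\gamma_m m^{it}$ as in the lemma. Observing that $m^c\ll 1$ controls the size of individual factors, but it does \emph{not} allow you to delete the weight $m^{-c}$ inside the absolutely-valued Dirichlet polynomial, since $\big|\sum_m \gamma_m m^{-c}m^{it}\big|$ and $\big|\sum_m \gamma_m m^{it}\big|$ are genuinely different functions of $t$. What your argument proves is the lemma with $(\gamma_m,\eta_n)$ replaced by $(\gamma_m m^{-c},\eta_n n^{-c})$, and there is no immediate way to pass back. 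The standard route (the one in \cite{VauT}, cited by the paper, which defers the proof to the literature) works on the imaginary axis with the Dirichlet--Fej\'er kernel: one uses
\[
\frac{1}{\pi}\int_{-\infty}^{\infty} e^{iut}\,\frac{\sin(Lt)}{t}\,dt \;=\; \mathbf{1}_{|u|<L}
\]
with $L=\log(2MN)$ and $u=\log(mn)-\alpha$ for a suitable shift $\alpha$, so that $\{|u|<L\}=\{mn\le X\}$. Truncating to $|t|\le T$ and summing against $\gamma_m\eta_n$ directly produces the factor $\min(|t|^{-1},\log(2MN))$ from $|\sin(Lt)/t|\le \min(|t|^{-1},L)$, and the Dirichlet polynomials appear untwisted as $\sum_m\gamma_m m^{it}$ and $\sum_n\eta_n n^{it}$. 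There is no extraneous $m^{-c}$ to dispose of.

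Second, the singular zone near $mn=X$. Your plan — ``fall back on the trivial pointwise bound $O(1)$ per pair while counting pairs by dyadically grouping $\log(mn)$ near $\log X$'' — cannot produce the factor $MNT^{-1}$. The trivial bound $O(1)$ per pair gives a contribution of size up to $\sum_{m,n}|\gamma_m\eta_n|$, with no decay in $T$, while the target error $MNT^{-1}\sum_{m,n}|\gamma_m\eta_n|$ does decay and becomes smaller than $\sum|\gamma_m\eta_n|$ as soon as $T>MN$. The key device you are missing is integrality: since $mn$ is always a positive integer, you may replace $X$ by the half-integer $\lfloor X\rfloor+\tfrac12$ without changing the left-hand side, after which $|\log(X/mn)|\gg 1/(MN)$ uniformly in $(m,n)$ within the ranges $m\le M$, $n\le N$. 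Then the truncation error per pair is $\ll \min\!\big(1,\tfrac{1}{T|\log(X/mn)|}\big)\ll MN/T$ for \emph{every} pair, and summing against $|\gamma_m\eta_n|$ gives exactly the second term of the lemma — with no need for a separate trivial bound at all. (In the Dirichlet-kernel version the same trick is applied to the endpoint $\alpha+L$, and the other endpoint $\alpha-L$ is harmless because it can be placed below $\log 1=0$.) As it stands, your dyadic-grouping paragraph is the heart of the argument, and it does not close.
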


\section{The polynomial large sieve 
inequality for characters and its bilinear version} 

\begin{lemma}[Polynomial large sieve inequality with characters]
\label{multLSI}
Let $Q,x>1$ and $(v_{n})$ be a complex sequence. Then
\[
   \sum_{\uq\sim Q} \frac{P(\uq)}{\ph(P(\uq))} 
   \sideset{}{^{*}}\sum_{\chi(P(\uq))} \Big| 
   \sum_{n\leq x}v_{n}\chi(n)\Big|^{2}
   \ll(Qx)^{\eps}\cdot \Delta(Q,x)\cdot \sum_{n\leq x} |v_{n}|^{2},
\]
where the star means that the sum is stretched over all
primitive characters $\chi$ mod $P(\uq)$.
\end{lemma}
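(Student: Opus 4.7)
The plan is to derive this character-form inequality from the additive version in Theorem~\ref{polyLSI} by the standard Gauss-sum duality argument, carried out one modulus $P(\uq)$ at a time and then summed over $\uq\sim Q$.

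First, I fix $\uq\sim Q$ and a primitive character $\chi$ mod $P(\uq)$. Multiplying the identity of Lemma~\ref{chiformula} by $v_n$ and summing over $n\leq x$ gives
\[
\tau(\overline{\chi})\sum_{n\leq x} v_n \chi(n) = \sum_{\substack{a\md P(\uq)\\\gcd(a,P(\uq))=1}} \overline{\chi}(a)\, S\Big(\frac{a}{P(\uq)}\Big),
\]
where the coprimality restriction on $a$ is automatic since $\overline{\chi}(a)=0$ otherwise. Taking absolute values squared and using $|\tau(\overline{\chi})|^2 = P(\uq)$ expresses $P(\uq)\,|\sum_n v_n \chi(n)|^2$ in terms of the values $S(a/P(\uq))$ at the reduced residues.

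Next, I sum over primitive characters $\chi$ mod $P(\uq)$ and apply Parseval's identity on the group of reduced residues: for any complex sequence $(y_a)$ indexed by $a$ coprime to $P(\uq)$,
\[
\sideset{}{^{*}}\sum_{\chi\md P(\uq)} \Big|\sum_{a} \overline{\chi}(a) y_a\Big|^2 \leq \sum_{\chi\md P(\uq)} \Big|\sum_{a} \overline{\chi}(a) y_a\Big|^2 = \ph(P(\uq)) \sum_{a} |y_a|^2,
\]
using the full orthogonality of characters mod $P(\uq)$ in the equality. Applying this to $y_a = S(a/P(\uq))$ and combining with the previous identity, then dividing by $\ph(P(\uq))$, yields
\[
\frac{P(\uq)}{\ph(P(\uq))}\sideset{}{^{*}}\sum_{\chi\md P(\uq)} \Big|\sum_{n\leq x} v_n \chi(n)\Big|^2 \leq \sum_{\substack{a\md P(\uq)\\\gcd(a,P(\uq))=1}} \Big|S\Big(\frac{a}{P(\uq)}\Big)\Big|^2.
\]

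Finally, I sum over $\uq\sim Q$. The left-hand side becomes exactly the expression to be estimated in Lemma~\ref{multLSI}, while the right-hand side is precisely $\Sigma_{Q,x,P}$ as defined in Theorem~\ref{polyLSI} (with $M=0$ and $N=x$). Invoking that theorem then delivers the bound $\ll (Qx)^{\eps}\,\Delta(Q,x)\sum_{n\leq x}|v_n|^2$, which is the claimed estimate. I do not expect any substantive obstacle: this is the textbook reduction from the additive to the multiplicative form of the large sieve, and the only point requiring attention is that the coprimality conditions on the two sides match, which they do precisely because only primitive characters appear on the left.
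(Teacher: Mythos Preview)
Your proposal is correct and follows essentially the same route as the paper: both use the Gauss-sum identity of Lemma~\ref{chiformula} to pass from $\sum_n v_n\chi(n)$ to the additive sums $S(a/P(\uq))$, bound the sum over primitive characters by the sum over all characters, apply orthogonality of characters (which you phrase as Parseval, while the paper expands the square explicitly), and then invoke Theorem~\ref{polyLSI}. The only cosmetic difference is that the paper carries out the orthogonality step by hand, arriving at the same pointwise inequality $\frac{P(\uq)}{\ph(P(\uq))}\sideset{}{^{*}}\sum_{\chi}|\cdot|^2\leq \sum_{a}|S(a/P(\uq))|^2$ before summing over $\uq$.
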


\begin{proof}
  If $\chi$ mod $P(\uq)$ is primitive, Lemma~\ref{chiformula} gives
\[
  \Big|\sum_{n\leq x} v_{n} \chi(n)\Big|^{2}
 = \frac{1}{P(\uq)} \Big|\sum_{a=1}^{P(\uq)} \sum_{n\leq x}
 \overline{\chi}(a) \e\Big(\frac{an}{P(\uq)}\Big)v_{n} \Big|^{2}.
\]
We sum this equation on the right hand side over all characters,
and on the left hand side over all primitive characters. We obtain
\begin{align*}
 \sideset{}{^{*}}\sum_{\chi(P(\uq))}
   &\Big|\sum_{n\leq x} v_{n} \chi(n)\Big|^{2}\\ &\leq
   \frac{1}{P(\uq)} \sum_{\chi(P(\uq))} \Big|\sum_{a=1}^{P(\uq)} \sum_{n\leq x}
 \overline{\chi}(a) \e\Big(\frac{an}{P(\uq)}\Big)v_{n} \Big|^{2} \\
  &= \frac{1}{P(\uq)}\sum_{a,c=1}^{P(\uq)} \sum_{m,n\leq x} \sum_{\chi(P(\uq))} 
  \overline{\chi}(a) \chi(c)
  \e\Big(\frac{an-cm}{P(\uq)}\Big)v_{n}\overline{v_{m}} \\
  &= \frac{\ph(P(\uq))}{P(\uq)}\sum_{\substack{a\md P(\uq)\\
      \gcd(a,P(\uq))=1}} \sum_{m,n\leq x} 
  \e\Big(\frac{a(n-m)}{P(\uq)}\Big)v_{n}\overline{v_{m}} \\
  &= \frac{\ph(P(\uq))}{P(\uq)}\sum_{\substack{a \md P(\uq)\\ \gcd(a,P(\uq))=1}} 
   \Big|S\Big(\frac{a}{P(\uq)}\Big)\Big|^{2},
\end{align*}
hence 
\[
   \sum_{\uq\sim Q}\frac{P(\uq)}{\ph(P(\uq))}
\sideset{}{^{*}}\sum_{\chi (P(\uq))}  
\Big|\sum_{n\leq x} v_{n} \chi(n)\Big|^{2}\ll   
   (Qx)^{\eps}\cdot\Delta(Q,x)\cdot\sum_{n\leq x}|v_{n}|^{2} 
\] by Theorem~\ref{polyLSI}.
\end{proof}

\begin{lemma}[Bilinear inequality]
\label{le:bilin}
   Let $x,Q,M,N> 1$, let $(a_{m})$ and 
   $(b_{n})$ be complex sequences. 
   Then
\begin{multline*}
  \sum_{\uq\sim Q}
   \frac{P(\uq)}{\ph(P(\uq))}\sideset{}{^{*}}\sum_{\chi(P(\uq))}
   \max_{X} \Big| \sum_{\substack{m\leq M\\n\leq N\\mn\leq X}} a_{m}b_{n}\chi(mn)\Big| \\
  \ll (QMN)^{\eps}\cdot \Big(\Delta(Q,M)
   \Delta(Q,N)\cdot \sum_{m\leq M}|a_{m}|^{2} 
  \sum_{n\leq N} |b_{n}|^{2}\Big)^{1/2},
 \end{multline*}
where the star means that the sum runs over all primitive characters.
\end{lemma}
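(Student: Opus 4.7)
The plan is to combine the three ingredients prepared in the previous sections in the standard way: use Lemma~\ref{ridofrestr} to detach the multiplicative constraint $mn\leq X$, apply Cauchy--Schwarz in the sum over $(\uq,\chi)$ inside the resulting $t$-integral, and bound each of the two resulting factors with Lemma~\ref{multLSI}.

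First I would write $\chi(mn)=\chi(m)\chi(n)$ and apply Lemma~\ref{ridofrestr} with $\gamma_{m}:=a_{m}\chi(m)$ and $\eta_{n}:=b_{n}\chi(n)$, choosing the truncation $T:=(QMN)^{C}$ for a sufficiently large absolute constant $C$. Since the bound produced by Lemma~\ref{ridofrestr} is independent of $X$, the $\max_{X}$ in the statement can be dropped at no cost. Setting
\[
S_{M}(t,\chi):=\sum_{m\leq M}a_{m}\chi(m)m^{it},\qquad S_{N}(t,\chi):=\sum_{n\leq N}b_{n}\chi(n)n^{it},
\]
what remains is to estimate
\[
\int_{-T}^{T}\sum_{\uq\sim Q}\frac{P(\uq)}{\ph(P(\uq))}\sideset{}{^{*}}\sum_{\chi(P(\uq))}|S_{M}(t,\chi)|\,|S_{N}(t,\chi)|\min(|t|^{-1},\log(2MN))\,dt,
\]
plus an error term of the form $MNT^{-1}\sum_{m,n}|a_{m}b_{n}|$ summed against the weight $P(\uq)/\ph(P(\uq))$ over $\uq$ and primitive $\chi$.

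For the main integral I would apply Cauchy--Schwarz to the inner weighted sum over $(\uq,\chi)$ at each fixed $t$. This bounds the integrand by
\[
\Big(\sum_{\uq,\chi^{*}}\tfrac{P(\uq)}{\ph(P(\uq))}|S_{M}(t,\chi)|^{2}\Big)^{1/2}\Big(\sum_{\uq,\chi^{*}}\tfrac{P(\uq)}{\ph(P(\uq))}|S_{N}(t,\chi)|^{2}\Big)^{1/2}\min(|t|^{-1},\log(2MN)).
\]
Each of the two factors under the square roots matches the left-hand side of Lemma~\ref{multLSI}, once with the sequence $(a_{m}m^{it})_{m\leq M}$, once with $(b_{n}n^{it})_{n\leq N}$. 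Since $|m^{it}|=|n^{it}|=1$, Lemma~\ref{multLSI} yields the $t$-independent upper bounds $(QM)^{\eps}\Delta(Q,M)\sum_{m}|a_{m}|^{2}$ and $(QN)^{\eps}\Delta(Q,N)\sum_{n}|b_{n}|^{2}$. Integrating the remaining weight $\min(|t|^{-1},\log(2MN))$ over $[-T,T]$ contributes at most $O(\log(2MN)+\log T)=O(\log(QMN))$, which is absorbed into the $(QMN)^{\eps}$ factor.

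For the error term I would use $\sum_{\uq\sim Q}\sum_{\chi^{*}}P(\uq)/\ph(P(\uq))\ll Q^{k+\ell}$ and the trivial Cauchy--Schwarz bound $\sum_{m,n}|a_{m}b_{n}|\leq (MN)^{1/2}(\sum_{m}|a_{m}|^{2})^{1/2}(\sum_{n}|b_{n}|^{2})^{1/2}$. Since $\Delta(Q,M)\Delta(Q,N)\geq Q^{2(k+\ell)}$, choosing the constant $C$ in $T=(QMN)^{C}$ large enough makes the total error term negligible compared with the target bound. The only genuine care is bookkeeping the interplay between the weight $P(\uq)/\ph(P(\uq))$, the number of primitive characters, and the parameter $T$; there is no deeper obstacle, since both tools (Lemma~\ref{ridofrestr} and Lemma~\ref{multLSI}) fit together precisely through this Cauchy--Schwarz step.
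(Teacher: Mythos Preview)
Your proposal is correct and follows essentially the same route as the paper's proof: apply Lemma~\ref{ridofrestr} to remove the constraint $mn\leq X$, then Cauchy--Schwarz in the $(\uq,\chi)$-sum inside the $t$-integral, and finish with Lemma~\ref{multLSI} applied to the twisted sequences $(a_{m}m^{it})$ and $(b_{n}n^{it})$. The only cosmetic difference is that the paper takes $T=(MN)^{3/2}$ rather than a large power of $QMN$, which is enough since the error term is $Q^{k+\ell}(MN)^{3/2}T^{-1}\|a\|\|b\|$ and $\Delta(Q,M)\Delta(Q,N)\geq Q^{2(k+\ell)}$.
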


\begin{proof}
  Let $A(t,\chi):=\sum_{m\leq M} a_{m}\chi(m)m^{it}$,
$B(t,\chi):=\sum_{n\leq N} b_{n} \chi(n) n^{it}$
and write $\|a\|:=(\sum_{m\leq M}|a_{m}|^{2})^{1/2}$ and
$\|b\|:=(\sum_{n\leq N}|b_{n}|^{2})^{1/2}$.
Then using Lemma~\ref{ridofrestr} with $\gamma_{m}=a_{m}\chi(m)$,
$\eta_{n}=b_{n}\chi(n)$ and summing up, we bound
the left hand side of the lemma by
\begin{align*}
  &\ll \int_{-T}^{T} \sum_{\uq\sim Q} \frac{P(\uq)}{\ph(P(\uq))}
  \sideset{}{^{*}}\sum_{\chi(P(\uq))} |A(t,\chi)B(t,\chi)|
  \min(|t|^{-1},\log (2MN) )dt \\&\hspace{4cm}+ 
  \sum_{q\leq Q}P(\uq)MNT^{-1}\sum_{m\leq
  M, n\leq N} |a_{m} b_{n}| \\
  &\ll (QMN)^{\eps}\Delta(Q,M)^{1/2} \Delta(Q,N)^{1/2} \|a\|\|b\|
  \int_{-T}^{T}\Xi(t)dt \\
  &\hspace{4cm}+ Q^{k+\ell}(MN)^{3/2}T^{-1}
  \|a\| \|b\|,
\end{align*}
where Lemma~\ref{multLSI} and the Cauchy--Schwarz inequality
has been used in the second estimate.
The assertion follows with $T=(MN)^{3/2}$.
\end{proof}

\section{Proof of the polynomial Basic Mean Value Theorem}
\label{sec:polyBMVT}
In this section, we prove the polynomial version of the Basic Mean
Value Theorem using the polynomial large sieve.

\begin{theorem}[Polynomial Basic Mean Value Theorem]
\label{thm:polyBMVT}
  Let $Q,x>1$, let $P$ be a polynomial and $\sigma>0$ 
  as in Assumptions~\ref{Assu}, and for a primitive character $\chi$ mod $q$ 
  we write $\psi(x,\chi):=\sum_{n\leq x} \chi(n) \Lambda(n)$. Then
\begin{equation}
\label{eq:polyBMVT}  
   \sum_{\uq\sim Q} \frac{P(\uq)}{\ph(P(\uq))}
     \sideset{}{^{*}}\sum_{\chi(P(\uq))} \sup_{y\leq x}|\psi(y,\chi)|
   \ll (Qx)^{\eps} \cdot\tilde{\Delta}(Q,x) 
\end{equation}
with
\[
   \tilde{\Delta}(Q,x) :=
     Q^{\ell-\sigma}x+Q^{\ell+(k-\sigma)/2}x^{5/6}+Q^{\ell+(k-1)\sigma/2}x^{1-\sigma/6},
     \text{if }x\geq Q^{2k+\sigma}, 
\]
and
\[
\tilde{\Delta}(Q,x) :=
     Q^{\ell+5k/6-\sigma/3}x^{2/3}, \text{if }Q^{k+3-\sigma}\leq x\leq Q^{2k+\sigma}.
\]
\end{theorem}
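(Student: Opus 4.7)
The plan is to follow Vaughan's classical route for the Basic Mean Value Theorem, with the classical large sieve replaced by its polynomial counterpart, namely Lemma~\ref{multLSI} and its bilinear consequence Lemma~\ref{le:bilin}. I fix a parameter $U\in[1,\sqrt{x}\,]$ to be optimized later and apply Lemma~\ref{Vauid} (Vaughan's identity) with $f(n)=\chi(n)$ to $\psi(y,\chi)-\psi(U,\chi)$; the truncated piece is bounded trivially by $\psi(U,\chi)\leq U$. This yields $|\psi(y,\chi)|\leq O(U)+(\log x)T_1(\chi)+T_2(\chi)+T_3(\chi)$ with the $T_i$ as in Lemma~\ref{Vauid}. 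The $\sup_{y\leq x}$ passes into the $T_i$'s without loss: inside $T_1$ it is absorbed by the existing $\max_w$, and inside $T_2,T_3$ it becomes the $\max_X$ already built into Lemma~\ref{le:bilin}.

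For the Type I part, I use that whenever $\chi$ is primitive mod $P(\uq)>1$, Polya--Vinogradov (Lemma~\ref{pvi}) gives $|\sum_{w<k\leq y/\ell}\chi(k\ell)|\ll P(\uq)^{1/2}\log P(\uq)\ll Q^{k/2}\log Q$ uniformly in $\chi$. Summing over $\ell\leq U$ yields $T_1(\chi)\ll UQ^{k/2}\log Q$; trivial summation over $\sideset{}{^*}\sum_{\chi(P(\uq))}1\leq\ph(P(\uq))$ and over the $Q^\ell$ tuples $\uq\sim Q$, together with $P(\uq)\ll Q^k$, produces
\[
\sum_{\uq\sim Q}\frac{P(\uq)}{\ph(P(\uq))}\sideset{}{^*}\sum_\chi(\log x)T_1(\chi)\ll x^\eps\,UQ^{\ell+3k/2}.
\]
For the Type II part, I split the $m$-range of $T_2,T_3$ dyadically into $m\sim M$ with $U\ll M\ll\max(x/U,U^2)$, set $N:=x/M$, and apply Lemma~\ref{le:bilin}. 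Since $|a_i(m)|\leq\log m$ and $|b_i(k)|\leq d(k)$, each dyadic contribution is $\ll x^\eps\bigl(\Delta(Q,M)\Delta(Q,N)\bigr)^{1/2}x^{1/2}$. Expanding $\Delta(Q,M)\Delta(Q,N)$ into nine cross-terms and maximizing each over admissible $M$ yields a total Type II bound of
\[
x^\eps\bigl(Q^{k+\ell}x^{1/2}+Q^{(k+2\ell-\sigma)/2}xU^{-1/2}+Q^{(k+2\ell+k\sigma)/2}x^{1-\sigma/2}U^{-(1-\sigma)/2}+Q^{\ell-\sigma}x+Q^{\ell+(k-1)\sigma/2}xU^{-\sigma/2}+Q^{\ell+k\sigma}x^{1-\sigma/2}\bigr).
\]

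It remains to choose $U$. Balancing the Type I bound $UQ^{\ell+3k/2}$ against the Type II term $Q^{(k+2\ell-\sigma)/2}xU^{-1/2}$ gives the common value $Q^{\ell+5k/6-\sigma/3}x^{2/3}$ at $U=x^{2/3}Q^{-(2k+\sigma)/3}$; this is exactly the bound of Case 2, and the constraints $Q^{k+3-\sigma}\leq x\leq Q^{2k+\sigma}$ ensure that $U\in[1,\sqrt{x}\,]$ and that each of the remaining cross-terms ($Q^{k+\ell}x^{1/2}$, $Q^{\ell-\sigma}x$, $Q^{\ell+k\sigma}x^{1-\sigma/2}$, and the two involving $(1-\sigma)$-exponents) is majorized by $Q^{\ell+5k/6-\sigma/3}x^{2/3}$. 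In the complementary range $x\geq Q^{2k+\sigma}$ that balancing point becomes suboptimal, and I would instead take $U=x^{1/3}$; the three Type II terms with negative $U$-exponents then specialize \emph{exactly} to the three summands of $\tilde\Delta(Q,x)$ in Case~1, while the Type I bound $x^{1/3}Q^{\ell+3k/2}$ is absorbed into $Q^{\ell+(k-\sigma)/2}x^{5/6}$ precisely when $x\geq Q^{2k+\sigma}$, which is how that threshold arises. The main obstacle I anticipate is the bookkeeping needed to verify that every one of the nine cross-terms from $\sqrt{\Delta(Q,M)\Delta(Q,N)}$ is correctly absorbed throughout each range, especially near the boundary $x\asymp Q^{2k+\sigma}$ and for small $k$, where the off-diagonal terms $Q^{k+\ell}x^{1/2}$ and $Q^{\ell+k\sigma}x^{1-\sigma/2}$ lie closest to the claimed bound.
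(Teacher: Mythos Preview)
Your proposal is correct and follows essentially the same route as the paper: Vaughan's identity with parameter $U$, Polya--Vinogradov for the Type~I sum giving $K_1\ll x^{\eps}UQ^{\ell+3k/2}$, the bilinear Lemma~\ref{le:bilin} applied after dyadic dissection of the Type~II sums, and then the identical choices $U=x^{1/3}$ in the range $x\geq Q^{2k+\sigma}$ and $U=x^{2/3}Q^{-(2k+\sigma)/3}$ in the range $Q^{k+3-\sigma}\leq x\leq Q^{2k+\sigma}$. The only cosmetic difference is that you collapse the nine cross-terms of $\Delta(Q,M)\Delta(Q,x/M)$ to six by symmetry (legitimate here since in both regimes one has $\max(x/U,U^2)=x/U$), whereas the paper keeps all nine under the square root before optimizing; the remaining bookkeeping you flag as the ``main obstacle'' is exactly the short calculation the paper carries out at the end of each case.
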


The second range for $x\leq Q^{2k+\sigma}$ is not relevant in the
proof of Theorem~\ref{th:BV}, but the result and proof in this case is
included here for completeness.

\begin{proof}
  Let $y=y(\chi)\leq x$ be such that $|\psi(y,\chi)|=\max_{z\leq
    x}|\psi(z,\chi)|$. Let $U\geq 1$, $U^{2}\leq x$. Then Vaughan's
  identity in the form of Lemma~\ref{Vauid} yields
\[
   |\psi(y,\chi)|\ll U+ (\log x) T_{1}(\chi) +T_{2}(\chi) +T_{3}(\chi),
\]  
where
\[
   T_{1}(\chi)=\sum_{r\leq U} \max_{w} \Big|\sum_{w<s\leq y/r} \chi(sr)\Big|,
\]
with
\[
   T_{i}(\chi)=\Big|\sum_{\substack{m>U,\\m\leq\max(U^{2},x/U)}}
   \sum_{s\leq x/m} a_{i}(m)b_{i}(s)\chi(sm) \Big|,\ i=2,3.
\]
Choosing $U$ such that it depends on $Q$ and $x$ only, we can sum over
all primitive $\chi$ and $\uq$. We obtain
\[
  \sum_{\uq\sim Q} \frac{P(\uq)}{\ph(P(\uq))} 
  \sideset{}{^{*}}\sum_{\chi(P(\uq))} |\psi(y(\chi),\chi)| \ll
  (UQ^{\ell+k} +K_{1}\log x +K_{2} +K_{3})\log x,
\]
where 
\[
   K_{j}:=\sum_{\uq\sim Q} \frac{P(\uq)}{\ph(P(\uq))} 
  \sideset{}{^{*}}\sum_{\chi(P(\uq))} T_{j}(\chi),\ j=1,2,3.
\]
We estimate $K_{1}$ using Polya--Vinogradov's inequality
Lemma~\ref{pvi} as
\[
   K_{1}\ll U\sum_{\uq\sim Q} P(\uq)^{3/2} \log^{2} x \ll Q^{\ell+3k/2}
   U\log^{2} x,
\]
which already dominates the term $UQ^{\ell+k}\log x$ above.

We proceed to estimate $K_{2}+K_{3}$. Let $M\leq x$. We will
use dyadic summation over $M$. For arithmetic functions $a$, $b$
we consider the expression 
\[
   K_{M}:= \sum_{\uq\sim Q} \frac{P(\uq)}{\ph(P(\uq))} 
  \sideset{}{^{*}}\sum_{\chi(P(\uq))} \Big| \sum_{m\sim M}
   \sum_{s\leq x/m} a(m)b(s)\chi(sm)\Big|.
\]
Writing the conditions of summation over $s$ as $s\leq x/M$,
$ms\leq x$, we apply the bilinear inequality of Lemma ~\ref{le:bilin}
(choosing $a(m)=0$ for $m\leq M$) which yields
\[
  K_{M}\ll (Qx)^{\eps} \Big(\Delta(Q,M) \Delta(Q,x/M)
  \sum_{m\sim M}|a(m)|^{2}
  \sum_{s\leq x/M}|b(s)|^{2}\Big)^{1/2}.
\]
Now use $\sum_{m\leq 2M}|a(m)|^{2}\ll M(\log M)^{2}$,
$\sum_{s\leq z}|b(s)|^{2}\ll z(\log z)^{3}$: this yields
\[
   K_{M}\ll x^{1/2+\eps}\Delta(Q,M)^{1/2}\Delta(Q,x/M)^{1/2}
\text{ for }M\leq x,
\]
hence
\begin{align*}
 K_{M}&\ll x^{1/2+\eps} (Q^{\ell+k}
 +Q^{\ell-\sigma}M+Q^{\ell+k\sigma}M^{1-\sigma})^{1/2} \\
 &\hspace{1cm} \cdot(Q^{\ell+k}
+Q^{\ell-\sigma}xM^{-1}+Q^{\ell+k\sigma}(x/M)^{1-\sigma})^{1/2}\\
&\ll x^{1/2+\eps} (Q^{2\ell+2k}+Q^{2\ell+k-\sigma}xM^{-1}+
 Q^{2\ell+k+k\sigma}(x/M)^{1-\sigma} \\ &\hspace{1cm}+ Q^{2\ell+k-\sigma}M
+ Q^{2\ell-2\sigma}x+Q^{2\ell+(k-1)\sigma}x^{1-\sigma}M^{\sigma}\\
&\hspace{1cm}+Q^{2\ell+k+k\sigma}M^{1-\sigma}+Q^{2\ell+(k-1)\sigma}xM^{-\sigma}
 +Q^{2\ell+2k\sigma}x^{1-\sigma})^{1/2}.
\end{align*}
Now the dyadic summation for $M=2^{\nu}U$, $M\leq W\leq x$,
with $\nu=0,1,2,\dots$ yields
\begin{align*}
  \sum_{\uq\sim Q} &\frac{P(\uq)}{\ph(P(\uq))} 
  \sideset{}{^{*}}\sum_{\chi(P(\uq))}
  \Big|\sum_{U<m\leq W} \sum_{s\leq x/m} a(m)b(s)\chi(sm) \Big| \\
   &\ll x^{1/2+\eps}(Q^{2\ell+2k}+Q^{2\ell+k-\sigma}xU^{-1}+
 Q^{2\ell+k+k\sigma}(x/U)^{1-\sigma} \\ &\hspace{1cm}+ Q^{2\ell+k-\sigma}W
+ Q^{2\ell-2\sigma}x+Q^{2\ell+(k-1)\sigma}x^{1-\sigma}W^{\sigma}\\
&\hspace{1cm}+Q^{2\ell+k+k\sigma}W^{1-\sigma}+Q^{2\ell+(k-1)\sigma}xU^{-\sigma}
 +Q^{2\ell+2k\sigma}x^{1-\sigma})^{1/2}.
\end{align*}
Choosing $W=\max(U^{2},x/U)$, $a=a_{i}$, $b=b_{i}$ for $i=2,3$, we
bound $K_{2}$ and $K_{3}$ by
\begin{align*}
  K_{2}+K_{3}&\ll x^{1/2+\eps}(Q^{2\ell+2k}
+ Q^{2\ell-2\sigma}x+Q^{2\ell+2k\sigma}x^{1-\sigma}\\
&\hspace{1cm}+Q^{2\ell+k-\sigma}xU^{-1}+
 Q^{2\ell+k+k\sigma}(x/U)^{1-\sigma} +Q^{2\ell+(k-1)\sigma}xU^{-\sigma}\\ 
&\hspace{1cm}+ 
Q^{2\ell+k-\sigma}U^{2}+Q^{2\ell+k+k\sigma}U^{2(1-\sigma)}
+Q^{2\ell+(k-1)\sigma}x^{1-\sigma}U^{2\sigma})^{1/2}.
\end{align*}

Together with $K_{1}\ll x^{\eps}Q^{\ell+3k/2}U$, we optimize the terms
depending on the two ranges $Q\ll x^{1/(2k+\sigma)}$ and
$x^{1/2k+\sigma}\ll Q\ll x^{1/(k+3-\sigma)}$ by choosing $U$ suitably
to obtain the bounds stated in the theorem.

\textbf{First range:} $Q^{2k+\sigma}\leq x$. In that case, we choose
$U=x^{1/3}$ so that $U^{2}=x/U$, this yields
\begin{align*}
  K_{2}+K_{3}&\ll x^{\eps}(Q^{\ell+k}x^{1/2} + 
 Q^{\ell-\sigma}x+Q^{\ell+k\sigma}x^{1-\sigma/2}\\
&+Q^{\ell+(k-\sigma)/2}x^{5/6}+Q^{\ell+k/2+k\sigma/2}x^{5/6-\sigma/3}
+ Q^{\ell+(k-1)\sigma/2}x^{1-\sigma/6}),
\end{align*}
and this also bounds $K_{1}$ since 
$K_{1}\ll x^{\eps}Q^{\ell+3k/2}x^{1/3}\ll
Q^{\ell+k/2-\sigma/2}x^{5/6}$ holds in the assumed first range.
Further, in this bound for $K_{2}+K_{3}$, 
we can leave out the first, third and
fifth summand since a simple calculation shows that
they are dominated by the fourth and sixth.

So, in the assumed range, we obtain
\[
   K_{2}+K_{3} \ll
   x^{\eps}(Q^{\ell-\sigma}x+Q^{\ell+k/2-\sigma/2}x^{5/6}
    + Q^{\ell+(k-1)\sigma/2}x^{1-\sigma/6}).
\]

\textbf{Second range:}
$Q^{k+3-\sigma}\leq x\leq Q^{2k+\sigma}$.
There, we choose $U=x^{2/3}Q^{-B}$ with $B=(\sigma+2k)/3$.
Hence
\begin{align*}
  K_{2}+K_{3}&\ll x^{\eps}(Q^{\ell+k}x^{1/2} + 
 Q^{\ell-\sigma}x+Q^{\ell+k\sigma}x^{1-\sigma/2}\\
&+Q^{\ell+(k-\sigma)/2}x^{2/3}Q^{B/2}+Q^{\ell+k/2+k\sigma/2}x^{2/3-\sigma/6}Q^{B(1-\sigma)/2}\\
&+ Q^{\ell+(k-1)\sigma/2}x^{1-\sigma/3}Q^{B\sigma/2}).
\end{align*}
Now the dominating summand in this bound is
$Q^{\ell+(k-\sigma)/2+B/2}x^{2/3}$ within this range,
and it also dominates the bound for $K_{1}$.

This shows the theorem.
\end{proof}

\section{Proof of Theorem~\ref{th:BV}}
\label{sec:thproof}

Before starting with the proof of Theorem~\ref{th:BV},
we deduce an auxiliary result from the previous sections. 

\begin{lemma}
\label{BVLemma}
  For a polynomial $P$ as in Assumptions~\ref{Assu}
of degree $k$ in $\ell$ variables we have
\[
   E:= Q^{-\ell} \sum_{\uq\sim Q} \frac{P(\uq)}{\ph(P(\uq))}
    \sideset{}{^{*}}\sum_{\chi_{1}(P(\uq))}
    \sup_{y\leq x}|\psi(x,\chi_{1})|\ll x^{1-\delta}
\]
for any small value $\delta>0$, assuming that
$x^{\eps/\sigma}\ll Q\leq x^{(1/3-2\eps)/(k-\sigma)}$ 
for any fixed $\eps>\delta$.
\end{lemma}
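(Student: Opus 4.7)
The plan is to read off Lemma~\ref{BVLemma} as an almost immediate corollary of the polynomial Basic Mean Value Theorem (Theorem~\ref{thm:polyBMVT}). The inner double sum in the definition of $E$ is exactly the quantity bounded in \eqref{eq:polyBMVT} (the ``$\psi(x,\chi_{1})$'' is read as $\psi(y,\chi_{1})$, in line with the supremum over $y$). After dividing by $Q^{\ell}$, it suffices to show that
$$
 (Qx)^{\eps}\bigl(Q^{-\sigma}x \;+\; Q^{(k-\sigma)/2}x^{5/6} \;+\; Q^{(k-1)\sigma/2}x^{1-\sigma/6}\bigr) \ll x^{1-\delta},
$$
provided we are in the first range $x\geq Q^{2k+\sigma}$ of the theorem, whereupon a final rechoosing of $\eps>\delta$ absorbs the $(Qx)^{\eps}$ factor.

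First I would verify that the hypothesized bounds on $Q$ actually place us in the first case of Theorem~\ref{thm:polyBMVT}. Since $Q\leq x^{(1/3-2\eps)/(k-\sigma)}$ and $(2k+\sigma)(1/3-2\eps)/(k-\sigma)<1$ for all sufficiently small $\sigma$ (the leading behaviour is roughly $2/3$), we have $Q^{2k+\sigma}\ll x$ with plenty of room, so the first-range bound for $\tilde\Delta(Q,x)$ applies.

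The three summand checks are direct bookkeeping. For the first, the lower bound $Q\gg x^{\eps/\sigma}$ gives $Q^{-\sigma}x\ll x^{1-\eps}$. For the second, the upper bound $Q\leq x^{(1/3-2\eps)/(k-\sigma)}$ yields $Q^{(k-\sigma)/2}\leq x^{1/6-\eps}$, so the product is $\leq x^{1-\eps}$. For the third, the same upper bound gives $Q^{(k-1)\sigma/2}\leq x^{(k-1)\sigma(1/3-2\eps)/(2(k-\sigma))}$; comparing the resulting exponent to $\sigma/6$ shows that the third summand is at most $x^{1-c}$ for some strictly positive $c=c(k,\sigma,\eps)>0$ (a short calculation gives $c\approx\sigma(1-\sigma+6\eps(k-1))/(6(k-\sigma))$, which is positive for all admissible parameters, including $k=1$ where the term is already $x^{1-\sigma/6}$).

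The only real obstacle is choosing $\delta$ small enough so that all three of $x^{1-\eps}$, $x^{1-\eps}$ and $x^{1-c}$ genuinely beat $x^{1-\delta}$ after absorbing the $(Qx)^{\eps}\ll x^{O(\eps)}$ prefactor. This is handled uniformly by taking $\delta$ to be a sufficiently small fraction of $\min(\eps,c)$, at which point Lemma~\ref{BVLemma} follows.
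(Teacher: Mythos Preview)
Your proposal is correct and follows essentially the same route as the paper: apply Theorem~\ref{thm:polyBMVT} in its first range, divide through by $Q^{\ell}$, and verify that each of the three resulting terms is $\ll x^{1-\delta}$ under the stated bounds on $Q$, with a final relabelling of $\eps$ to absorb the $(Qx)^{\eps}$ prefactor. The paper compresses the three checks into one line and handles the prefactor by working first with parameters $\eps+\delta$ and then renaming, whereas you spell out each summand and give an explicit formula for the constant $c$ in the third term; these are cosmetic differences only.
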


\begin{proof}
  Theorem~\ref{thm:polyBMVT} yields the bound $E\ll
  Q^{-\ell}x^{\eps}\tilde{\Delta}(Q,x)$ with
  \[
  \tilde{\Delta}(Q,x):=Q^{\ell-\sigma}x
  +Q^{\ell+(k-\sigma)/2}x^{5/6}+Q^{\ell+(k-1)\sigma/2}x^{1-\sigma/6}
  \]
  assuming that $Q\leq x^{1/(2k+\sigma)}$: hence
  \[
      E\ll
      x^{\eps}(Q^{-\sigma}x
      +Q^{(k-\sigma)/2}x^{5/6}
      +Q^{(k-1)\sigma/2}x^{1-\sigma/6})\ll x^{1-\delta}
  \]
  holds in the range $x^{(\eps+\delta)/\sigma}\ll Q\leq
  x^{(1/3-2(\eps+\delta))/(k-\sigma)}$ for $Q$. Now replace
  $\eps+\delta$ by $\eps>\delta$ in the upper and lower bound.
\end{proof}

Now we give the main proof.

\begin{proof}[Proof of Theorem~\ref{th:BV}]
Let $P$ be the polynomial of degree $2k$ as in the statement 
and $\sigma=1/(4rk)$ with $r=\binom{2k+\ell-1}{\ell}-1$,
$\eps'=2\eps/(2k-\sigma)$.

Let $Q,x > 1$, and for a character $\chi$ mod 
$P(\uq)$, we set
$\psi'(x,\chi):=\psi(x,\chi)$ if $\chi$ is different from the
principal character $\chi_{0}$, and
$\psi'(x,\chi_{0})$ $:=\psi(x,\chi_{0})-x$ otherwise.
 So for $y\leq x$ we have
  \[
      E(y;P(\uq),a)=\psi(y;P(\uq),a)-\frac{y}{\ph(P(\uq))}
    = \frac{1}{\ph(P(\uq))} \sum_{\chi(P(\uq))} \overline{\chi}(a) 
    \psi'(y,\chi), 
  \]
and hence
  \[
     \max_{\substack{ a \md P(\uq)\\\gcd(a,P(\uq))=1}} |E(y;P(\uq),a)|
    \leq \frac{1}{\ph(P(\uq))} \sum_{\chi(P(\uq))} 
   |\psi'(y;P(\uq),a)|.
  \]
  If $\chi$ is induced by the primitive character $\chi_{1}$ 
  modulo $d$ with $d\mid P(\uq)$, then 
  $\psi(y,\chi)-\psi'(y,\chi_{1})\ll (\log (yP(\uq)))^{2}$.
Thus
  \[     
    \max_{\substack{ a \md P(\uq)\\\gcd(a,P(\uq))=1}} |E(y;P(\uq),a)|
    \ll \frac{1}{\ph(P(\uq))}
\sum_{\substack{\chi(P(\uq))\\\text{ind. of } \chi_{1}\\ 
   \md d\mid P(\uq)}} 
    |\psi'(y,\chi_{1})| + (\log x)^{2},
  \]
  and so
  \begin{multline*}
    \sum_{\uq\sim Q}  G_{\uq}\frac{\ph(P(\uq))}{Q^{\ell}}
   \sup_{y\leq x} \max_{\substack{ a \md P(\uq)\\\gcd(a,P(\uq))=1}} 
      |E(y;P(\uq),a)|\\
   \ll \sum_{\uq\sim Q} \frac{G_{\uq}}{Q^{\ell}}
   \sum_{\substack{\chi(P(\uq))\\\text{ind. of } \chi_{1}\\ 
   \md d\mid P(\uq)}} \sup_{y\leq x} |\psi'(y,\chi_{1})|+
    Q^{2k} (\log x)^{2+k},
  \end{multline*}
  where the term $Q^{2k}(\log x)^{2+k}$ is clearly admissible in the 
  considered $Q$-range, since there, $Q\ll x^{1/4k}$.

\zeile
  Now due to the assigned weight $G_{\uq}$, each $P(\uq)$ is
  squarefree, so $\uq$ is so that each
  $q_{u(i)}^{2}+q_{v(i)}^{2}=p_{i}$ is a prime, $1\leq i\leq k$,
  and these primes are pairwise different.
  Hence, for a divisor $d$ of $P(\uq)$, we have $d=1$ or 
  $d=\tilde{P}(\uq)$ for a polynomial $\tilde{P}$ that 
  divides $P$ which is of a similar shape as $P$ itself.
  We split the sum over $\chi$ according to these two
  cases.

  In the first case, when $d=1$, we have $\chi_{1}\equiv 1$
(the constant $1$ character) and $\chi=\chi_{0}$ mod $P(\uq)$ is
unique, hence
\[
    \sum_{\uq\sim Q} \frac{G_{\uq}}{Q^{\ell}} \sup_{y\leq x} 
   |\psi(y,\chi_{0})-y|\ll x(\log x)^{-A}
\]
for any $A>0$ by the prime number theorem.

In the second case, we obtain the expression
\begin{multline*}
  \sum_{1<d\leq P(\uq)} 
   \sum_{\substack{\uq\sim Q\\ d\mid P(\uq)}} \frac{G_{\uq}}{Q^{\ell}}
   \sideset{}{^{*}}\sum_{\chi_{1}(d)} \sup_{y\leq x}
    |\psi'(y,\chi_{1})| \\
  \ll  \sum_{\tilde{P}\mid P} 
  \sum_{\uq\sim Q} \frac{G_{\uq}}{Q^{\ell}}
 \sideset{}{^{*}}\sum_{\chi_{1}(\tilde{P}(\uq))} \sup_{y\leq x}
    |\psi'(y,\chi_{1})|.
\end{multline*}

Now for every $\tilde{P}$, we apply Lemma~\ref{BVLemma}
and together with the trivial observation 
$\#\{\uq\sim Q\}\ll Q^{\ell}$, we bound this expression by
\[
  \ll \sum_{\tilde{P}\mid P} x^{1-\delta}(\log x)^{k+1},
\]
which holds for $x^{\eps/\sigma}\ll Q\ll x^{(1/3-2\eps)/(2k-\sigma)}$
and $\eps>\delta>0$.
We used that
the exponents $(1/3-2\eps)/(2k(\tilde{P})-\sigma(\tilde{P}))$
are all $\geq (1/3-2\eps)/(2k-\sigma)$,
since for $\tilde{P}\mid P$
with $\tilde{P}\neq P$,
we have $\deg\tilde{P}\leq \deg P-2$.
This yields the desired estimate of the theorem since there are
only $\ll_{k} 1$ many divisor polynomials of $P$ in $\Z[\ux]$.
\end{proof}

\section{Proof of Theorem~\ref{th:cases}}
\label{sec:last}

The proof depends on the following result of Fouvry and
Iwaniec in \cite{FI}.

\begin{theorem}[Fouvry and Iwaniec]
\label{th:FI}
  Let $(\lambda_{\ell})$ be a complex sequence with 
  $|\lambda_{\ell}|\leq 1$. Then, if $A,x>1$, we have
\[
   \sum_{\ell^{2}+m^{2}\leq x} \lambda_{\ell} \Lambda(\ell^{2}+m^{2})
    = \sum_{\ell^{2}+m^{2}\leq x} \lambda_{\ell}
    \frac{4c}{\pi}\theta(\ell)+O_{A}\Big(\frac{x}{(\log x)^{A}}\Big)
\]
with $\theta(\ell):=\prod_{p\mid \ell}(1-\chi(p)/(p-1))^{-1}$
and $c:=\prod_{p}(1-\frac{\chi(p)}{(p-1)(p-\chi(p))})$.
\end{theorem}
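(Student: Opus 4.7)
The target is a Siegel--Walfisz style asymptotic for primes represented by $\ell^2+m^2$ with $\lambda$-weights on one coordinate; my plan is to port the problem to the Gaussian integers $\Z[i]$ and deploy Heath--Brown's identity together with linear and bilinear sum estimates.

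First I identify $(\ell,m)\in\Z^2$ with $\alpha=\ell+im\in\Z[i]$, so $\ell^2+m^2=N(\alpha)$. Since $\Lambda(N(\alpha))\neq0$ only when $N(\alpha)$ is a prime power, and the proper prime powers contribute only $O(x^{1/2}\log x)$ in total (using $|\lambda_\ell|\leq1$), the left hand side reduces up to admissible error to a sum over Gaussian primes $\pi$ with $N(\pi)\leq x$, weighted by $\lambda_{\operatorname{Re}\pi}\log N(\pi)$, together with a factor $\tfrac14$ accounting for the units $\pm1,\pm i$. I then apply Heath--Brown's identity to the Gaussian von Mangoldt function $\Lambda_{\Z[i]}$, rewriting this as an $O_A(1)$-combination of bilinear forms
\[
   B(M,N) = \sum_{\substack{\beta,\gamma\in\Z[i]\\ N(\beta)\sim M,\,N(\gamma)\sim N}} a(\beta)\,b(\gamma)\,\lambda_{\operatorname{Re}(\beta\gamma)}\,\mathbf{1}_{N(\beta\gamma)\leq x},
\]
with $MN\asymp x$ and $a,b$ bounded arithmetic functions supported in the appropriate size ranges.

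In the Type~I ranges (one of $M,N$ small) I evaluate $B(M,N)$ by fixing $\beta$, expanding the residues of $\operatorname{Re}(\beta\gamma)$ modulo $N(\beta)$ by additive characters on $\Z$, and extracting the main term as the zero-frequency contribution. An explicit prime-by-prime local analysis then produces the claimed main term $\tfrac{4c}{\pi}\sum_\ell\lambda_\ell\theta(\ell)$: the factor $(1-\chi(p)/(p-1))^{-1}$ arises at primes $p\mid\ell$ because the local probability that $p\mid N(\alpha)$ changes once the constraint $p\mid\ell$ is imposed, and the product of the remaining local factors gives the constant $c$. The non-zero-frequency contribution is oscillatory and is controlled by Weil's bound for Kloosterman-type sums in $\Z[i]$, then absorbed into the error.

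The Type~II ranges (both $M,N$ intermediate) are the technical heart and the principal obstacle. After Cauchy--Schwarz in $\beta$, they reduce to bounding $\sum_\beta \lambda_{\operatorname{Re}(\beta\gamma_1)}\,\overline{\lambda_{\operatorname{Re}(\beta\gamma_2)}}$ averaged against $a(\gamma_1)\overline{a(\gamma_2)}$; writing $\beta=b_1+ib_2$ one has $\operatorname{Re}(\beta\gamma)=b_1\operatorname{Re}\gamma-b_2\operatorname{Im}\gamma$, a non-degenerate $\Z$-linear form in $(b_1,b_2)$ whenever $\gamma\neq0$, so a large-sieve / Parseval argument on $\R^2/\Z^2$ delivers square-root cancellation off the diagonal $\gamma_1=\pm\gamma_2$, while the diagonal case gives a contribution small enough to absorb. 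The hardest subtlety is the sharp cutoff $N(\alpha)\leq x$, which interacts poorly with the bilinear estimates because it is not smooth; I would remedy this by inserting a smooth majorant/minorant, running the above estimates on the smooth sum, and recovering the sharp cutoff by Perron's formula with contour shifts, at the cost of a $(\log x)^{O(1)}$ factor that is harmless since $A$ may be taken arbitrarily large.
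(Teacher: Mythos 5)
The paper does not prove this result: it is imported verbatim from Fouvry and Iwaniec's paper \emph{Gaussian primes} (cited as \cite{FI}) and used as a black box in Section~6, so there is no internal argument here to compare your sketch against. What you have written is an attempted reconstruction of a long and technically demanding Acta Arithmetica paper, and while the opening moves you describe --- pass to $\Z[i]$, discard prime powers, apply a combinatorial identity for $\Lambda$, and separate Type~I from Type~II ranges --- do match the general architecture of that work (Fouvry and Iwaniec use a Vaughan-style decomposition rather than Heath--Brown's, but this is cosmetic), the Type~II step, which is the genuine content, does not survive scrutiny as you have stated it.

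Concretely, after Cauchy--Schwarz in $\beta$ and opening the square you must control
$\sum_{\gamma_1,\gamma_2} b(\gamma_1)\overline{b(\gamma_2)}\sum_\beta \lambda_{\operatorname{Re}(\beta\gamma_1)}\overline{\lambda_{\operatorname{Re}(\beta\gamma_2)}}$,
and you claim that a Parseval or large-sieve argument on $\R^2/\Z^2$ produces square-root cancellation in the $\beta$-sum once $\gamma_1\neq\pm\gamma_2,\pm i\gamma_2$. This cannot be right for arbitrary $|\lambda_\ell|\leq 1$. Take $\lambda\equiv 1$: for fixed $\gamma_1,\gamma_2$ with $\operatorname{Im}(\overline{\gamma_1}\gamma_2)\neq 0$ the map $\beta\mapsto\bigl(\operatorname{Re}(\beta\gamma_1),\operatorname{Re}(\beta\gamma_2)\bigr)$ is an injective $\Z$-linear map $\Z^2\to\Z^2$ of determinant $\pm\operatorname{Im}(\overline{\gamma_1}\gamma_2)$, so the inner sum is exactly the number of admissible $\beta$, of order the full count rather than its square root. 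There is no oscillation to exploit by a Parseval step; any nontrivial saving must come from averaging over $\gamma_1,\gamma_2$ together with a substantial arithmetic input, and this is precisely where Fouvry and Iwaniec invest the bulk of their effort (their bilinear-form theorem ultimately rests on cancellation in Kloosterman-type sums and on equidistribution of the roots of $\nu^2\equiv -1\pmod q$, none of which figures in your proposal). As written, the central estimate in your plan does not go through, and the remaining steps --- the local computation giving $\theta(\ell)$ and $c$, and the smoothing of the sharp cutoff --- all presuppose that the Type~II sums have been shown to be negligible.
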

Note that $\theta(\ell)\geq \ph(\ell)/\ell\gg 1/(\log \log\ell)$ holds.

Using this theorem, the proof of Theorem~\ref{th:cases} can be worked out
by induction on $k$. 

\begin{proof}[Proof of Theorem~\ref{th:cases}]
It suffices to prove Theorem~\ref{th:cases} 
without the factor $\mu^{2}(P(\uq))$, that is
\begin{equation}
  \label{eq:sta}
  \sum_{\uq\sim Q} \Lambda(q_{u(1)}^{2}+q_{v(1)}^{2})\cdots
   \Lambda(q_{u(k)}^{2}+q_{v(k)}^{2}) \gg \frac{Q^{\ell}}{(\log Q)^{C}}
\end{equation}
for some constant $C>0$, what
can be seen as follows: In the difference of the left hand sides, 
at least one of the $\Lambda$-arguments must be a prime power, say, 
that this is in the $i$-th $\Lambda$-factor. 
Let $\uq'$ denote the $(\ell-2)$-tuple obtained from
$\uq$ by deleting the coordinates with $u(i)$ and $v(i)$.
Then the deviation can be bounded by
\begin{multline*}
   \sum_{\substack{m\leq  8Q^{2} \\ m=p^{k}\\k\geq 2}} \Lambda(m)
   \sum_{\substack{q_{u(i)},q_{v(i)}\sim Q\\ q_{u(i)}^{2}+q_{v(i)}^{2}=m}}
   \sum_{\uq'\sim Q} \prod_{\substack{j=1\\j\neq i}}^{k}
   \Lambda(q_{u(j)}^{2}+q_{v(j)}^{2})\\  \ll Q\log Q\cdot
   Q^{\ell-2}(\log Q)^{k-1},
\end{multline*}
what is admissible for the desired lower bound of the theorem.

Now we give the proof of \eqref{eq:sta} by induction.
Let $k=1$: then we have to show that
 \[
     \sum_{q_{1},q_{2}\sim Q} 
   \Lambda(q_{1}^{2}+q_{2}^{2})
   \gg \frac{Q^{2}}{(\log Q)^{C}}
\]
holds for a constant $C>0$.
A simple geometric argument shows that
\begin{multline*}
    \sum_{q_{1},q_{2}\sim Q}\Lambda(q_{1}^{2}+q_{2}^{2})
   =\sum_{2Q^{2}<q_{1}^{2}+q_{2}^{2}\leq 8Q^{2}} \Lambda(q_{1}^{2}+q_{2}^{2})
    -2\sum_{\substack{2Q^{2}<q_{1}^{2}+q_{2}^{2}\leq
        8Q^{2}\\q_{1}>2Q}} \Lambda(q_{1}^{2}+q_{2}^{2})\\
  -2\Big(\sum_{\substack{2Q^{2}<q_{1}^{2}+q_{2}^{2}\leq
      5Q^{2}\\q_{1}\leq Q}} \Lambda(q_{1}^{2}+q_{2}^{2})
  - \sum_{\substack{2Q^{2}<q_{1}^{2}+q_{2}^{2}\leq 5Q^{2}\\q_{1}\geq
      2Q}} \Lambda(q_{1}^{2}+q_{2}^{2})\Big).
\end{multline*}

Each sum is of the form $\sum_{y<q_{1}^{2}+q_{2}^{2}\leq x}
\lambda_{q_{1}}\Lambda(q_{1}^{2}+q_{2}^{2})$ where
$\lambda_{q_{1}}$ is the characteristic function of the 
conditions on $q_{1}$. Hence, to each sum, Theorem~\ref{th:FI} applies
giving
\begin{align*}
  \sum_{y<q_{1}^{2}+q_{2}^{2}\leq x} \lambda_{q_{1}}
  \frac{4c}{\pi} \theta(q_{1}) 
 +O_{A}\Big(\frac{Q^{2}}{(\log Q)^{A}}\Big),
\end{align*}
and the main terms combine again. In such a way, we obtain
\begin{align*}
     \sum_{q_{1},q_{2}\sim Q} \Lambda(q_{1}^{2}+q_{2}^{2}) 
   &= \sum_{q_{1},q_{2}\sim Q} \frac{4c}{\pi} \theta(q_{1}) +
   O_{A}\Big(\frac{Q^{2}}{(\log Q)^{A}}\Big) \\
  &\gg \sum_{q_{1},q_{2}\sim Q} \frac{1}{\log\log q_{1}}+
 O_{A}\Big(\frac{Q^{2}}{(\log Q)^{A}}\Big)
 \gg \frac{Q^{2}}{\log Q},
\end{align*}
fixing 
$A>1$ in the last step.

Now let $k\geq 2$ and assume that estimate~\eqref{eq:sta} holds 
for $k-1$: we proceed to
show it for $k$. Surely, $\ell\geq 2$, 
and we divide the left hand side in~\eqref{eq:sta}  
by $(\log Q)^{k-1}Q^{\ell-2}$ and obtain
\[
  \sum_{\uq\sim Q} (\log Q)^{1-k} Q^{2-\ell}\Big(\prod_{i=2}^{k}
   \Lambda(q_{u(i)}^{2}+q_{v(i)}^{2})\Big)\Lambda(q_{u(1)}^{2}+q_{v(1)}^{2}).
\]
Let $\uq'$ be obtained from $\uq$ by deleting the 
coordinates with index $u(1)$ and $v(1)$.
By assumption, one of the indices $u(1)$ and $v(1)$ does not occur in
$\{u(2),\dots,u(k),v(2),\dots,v(k)\}$,
assume w.l.o.g.\ that this is $u(1)$.
Then the considered sum transforms into
\[
  \sum_{q_{u(1)},q_{v(1)}\sim Q}\lambda_{q_{v(1)}}\Lambda(q_{u(1)}^{2}+q_{v(1)}^{2})
\]
with
\[
   \lambda_{q_{v(1)}}:=(\log (8Q^{2}))^{1-k}Q^{2-\ell} \sum_{\uq'\sim Q}
 \prod_{i=2}^{k}\Lambda(q_{u(i)}^{2}+q_{v(i)}^{2}) \leq 1,
\]
which does not depend on $u(1)$. By induction hypothesis, we have
\begin{equation}
\label{eq:lamb}
  \sum_{q_{v(1)}\sim Q} \lambda_{q_{v(1)}} \gg (\log Q)^{1-k} Q^{2-\ell}
  \frac{Q^{\ell-1}}{(\log Q)^{C}}=Q(\log Q)^{1-k-C}
\end{equation}
for some constant $C>0$.

We apply Theorem~\ref{th:FI} 
similarly
to the case $k=1$, which yields
\begin{align*}
  &\sum_{q_{u(1)},q_{v(1)}\sim Q} \lambda_{q_{v(1)}}
  \Lambda(q_{u(1)}^{2}+q_{v(1)}^{2})\\
&\geq\sum_{\substack{13Q^{2}/4<q_{u(1)}^{2}+q_{v(1)}^{2}\leq 5Q^{2}\\
    Q<q_{v(1)}\leq 3Q/2}} \lambda_{q_{v(1)}}
    \Lambda(q_{u(1)}^{2}+q_{v(1)}^{2}) \\
   &= \sum_{\substack{13Q^{2}/4<q_{u(1)}^{2}+q_{v(1)}^{2}\leq 5Q^{2}\\
       Q<q_{v(1)}\leq 3Q/2}} \lambda_{q_{v(1)}} \frac{4c}{\pi} 
      \theta(q_{v(1)}) + O_{A}\Big(\frac{Q^{2}}{(\log Q)^{A}}\Big) \\
  &\gg \sum_{\substack{13Q^{2}/4<q_{u(1)}^{2}+q_{v(1)}^{2}\leq
      5Q^{2}\\  Q<q_{v(1)}\leq 3Q/2}}
  \frac{\lambda_{q_{v(1)}}}{\log\log q_{v(1)}}+
  O_{A}\Big(\frac{Q^{2}}{(\log Q)^{A}}\Big)
  \gg \frac{Q^{2}}{(\log Q)^{C'}}
\end{align*}
for a constant $C'>0$ and a $A>1$ chosen large enough,
where we used \eqref{eq:lamb} in the last estimate 
(adjusted to an appropriate scaled box that is contained 
in the considered region). 
This concludes the proof of \eqref{eq:sta} and therefore
of Theorem~\ref{th:cases}.
\end{proof}



\end{document}